\documentclass[11pt]{amsart}

\usepackage{amssymb}
\usepackage{setspace}
\usepackage{tikz-cd}
\usepackage{pict2e}
\usepackage{amsmath}
\usepackage{mathrsfs}
\usepackage{amsthm}
\usepackage{verbatim}
\usepackage{graphicx}
\usepackage{epstopdf}
\usepackage{epsfig}
\usepackage{color}
\usepackage{bm}
\usepackage{chngpage}
\usepackage{graphicx}
\usepackage{multirow}
\usepackage{multicol}
\usepackage{eucal}
\usepackage{hyperref}
\usepackage{graphicx}


\makeatletter
\newcommand*\bigcdot{\mathpalette\bigcdot@{.5}}
\newcommand*\bigcdot@[2]{\mathbin{\vcenter{\hbox{\scalebox{#2}{$\m@th#1\bullet$}}}}}
\makeatother

\definecolor{red}{rgb}{0.8,0,0}
\definecolor{darkorange}{rgb}{1,0.4,0}
\definecolor{lightorange}{rgb}{1,0.6, 0}
\definecolor{yellow}{rgb}{1,0.8, 0}

\setcounter{footnote}{6}

\newtheorem{theorem}{Theorem}
\newtheorem{remark}{Remark}

\newtheorem{lemma}{Lemma}

\newcommand\NED{\operatorname{NED}}
\newcommand\BDM{\operatorname{BDM}}
\newcommand\WF{\operatorname{WF}}
\newcommand\CT{\operatorname{CT}}
\newcommand\RT{\operatorname{RT}}

\newcommand\grad{\operatorname{grad}}
\renewcommand\div{\operatorname{div}}

\newcommand\curl{\operatorname{curl}}
\newcommand\rot{\operatorname{rot}}
\newcommand\ran{\mathcal{R}}
\renewcommand\ker{\mathcal{N}}

\newcommand\dd{\,\mathrm{d}}

\makeatletter
\@addtoreset{equation}{section}
\makeatother

\setlength{\topmargin}{-0.5in}
\setlength{\textheight}{9.5in}
\setlength{\textwidth}{5.85in}
\setlength{\oddsidemargin}{0.325in}
\setlength{\evensidemargin}{0.325in}
\setlength{\marginparwidth}{1.0in}

\usepackage{geometry}
\geometry{left=3cm,right=3cm,top=3cm,bottom=3cm}

\begin{document}

\title{Partially discontinuous nodal finite elements for $H(\curl)$ and $H(\div)$}

\author{Jun Hu \and  Kaibo Hu \and Qian Zhang}
\address{ LMAM and School of Mathematical Sciences, Peking University, Beijing 100871, P. R. China}
\email{hujun@math.pku.edu.cn}
 \address{Mathematical Institute,
University of Oxford, Andrew Wiles Building, Radcliffe Observatory Quarter, 
Oxford, OX2 6GG, UK}
\email{Kaibo.Hu@maths.ox.ac.uk}
\address{Department of Mathematical Sciences, Michigan Technological University, Houghton, MI 49931, USA.}
\email{qzhang15@mtu.edu}

\date{}

\maketitle

\begin{abstract}
We investigate discretization of $H(\curl)$ and $H(\div)$ in two and three space dimensions by partially discontinuous nodal finite elements, i.e., vector-valued Lagrange finite elements with discontinuity in certain directions. These spaces can be implemented as a combination of continuous and discontinuous Lagrange elements and fit in de~Rham complexes. We construct well-conditioned nodal bases. 
\end{abstract}

\tableofcontents
\section{Introduction}

The edge elements  \cite{Nedelec.J.1980a,Nedelec.J.1986a} have achieved tremendous success in  computational electromagnetism. These elements fix the notorious spurious solutions in both eigenvalue and source problems  \cite{arnold2018finite,bossavit1990solving,sun1995spurious}. The key of this success is that the N\'{e}d\'{e}lec element fits in a finite element de~Rham complex (c.f., \cite{sun1995spurious}).  
Even though discrete differential forms \cite{Arnold.D;Falk.R;Winther.R.2006a,bossavit1988whitney,hiptmair1999canonical} have been a mature solution for many problems in computational electromagnetism, there are still interests in schemes with proper modifications of the vector-valued Lagrange elements for both theoretical and practical reasons. A theoretical question is what elements can solve the Maxwell equations without spurious solutions. In \cite{sun1995spurious} the authors showed that the vector-valued Lagrange element on the Powell-Sabin mesh avoids spurious eigenmodes since the kernel of the $\rot$ operator is characterized by the Powell-Sabin $C^{1}$ element. In a modern language, this means that the vector-valued Lagrange space on this mesh fits in a smoother de~Rham (Stokes) complex \cite{boffi2020convergence}. Nevertheless, the $C^{0}$ continuity may still lead to spurious solutions of the source problem on L-shaped domains \cite[Chapter 5]{arnold2018finite}. Practically, it is crucial to construct efficient bases for high order elements. This is always an art since one has to balance condition numbers, sparsity, fast evaluation, rotational symmetry, etc. See, e.g., \cite{ainsworth2001hierarchic,jorgensen2004higher,schoberl2005high,sun2001construction,webb1999hierarchal,xin2011well,xin2013construction,zaglmayr2006high}. Nodal elements may also be preferred due to the canonical implementation \cite{badia2012nodal,boyse1992nodal}. Finite elements with extra vertex/edge smoothness (supersmoothness) also find applications in solving the Hamilton-Jacobi-Bellman equations and may avoid averaging in semi-Lagrange methods \cite{wouter-thesis}.

This paper investigates another variant of the vector-valued Lagrange finite elements with the following desirable properties: 1) The elements allow discontinuity (not $C^{0}$-conforming), such that spurious solutions arising from regularity should not appear. 2) The elements fit in complexes. 3) There exist Lagrange type nodal bases, and thus the construction (condition number, orthogonality) can be based on results of scalar problems, c.f., \cite{karniadakis2013spectral}.  
The idea of the construction is to allow discontinuity of certain components across the faces of the elements. We will refer to these elements as {\it partially discontinuous nodal elements}.
 In two space dimensions (2D), the Stenberg element \cite{Stenberg2010} satisfies all these conditions. The corresponding complex was investigated in \cite{christiansen2016nodal} (see also \cite{gillette2018nonstandard} for the cubical case). Nevertheless, the three dimensional (3D) case was left open in \cite{christiansen2016nodal} (see Section 2.2 for a detailed review).

This paper aims to construct the 3D version of the partially discontinuous nodal finite elements. In particular, we obtain two finite element de~Rham complexes: one contains an $H(\div)$-conforming partially discontinuous nodal element, and another contains an $H(\curl)$ version (note that they cannot fit in the same complex). The $H(\div)$   element was constructed in \cite{christiansen2016nodal}. To the best of our knowledge, the complex constructed herein is new.   Finite elements for $H(\curl)$ with a higher order supersmoothness can be found in \cite{christiansen2016nodal}. However, they do not admit Lagrange type bases. Complexes containing vector-valued Lagrange elements on special meshes were constructed in \cite{fu2020exact,guzman2020exact}, primally in the context of fluid problems. Nevertheless, the $C^{0}$ continuity does not exclude spurious solutions for the Maxwell source problems. In this paper, we obtain a new  partially discontinuous nodal $H(\curl)$ finite element and the corresponding complex on the Worsey-Farin split of tetrahedra.  

Breaking some face degrees of freedom (e.g., treating the degrees of freedom of the tangential components as interior ones by allowing multi-values on neighboring elements) is equivalent to enriching face-based bubble functions. The idea of using matrix-valued Lagrange spaces enriched with $H(\div)$ symmetric matrix bubbles led to partially discontinuous nodal elements for the Hellinger-Reissner principle \cite{Hu2015,hu2015family,hu2016finite},  
see \cite{chen2021geometric} for an extension of this idea.

The rest of this paper will be organized as follows. In Section 2, we introduce notation and review existing results. In Section 3 we present a new $H(\curl)$ element and the corresponding complex. In Section 4, we construct a discrete de Rham complex which contains the nodal $H(\div)$ elements \cite{christiansen2016nodal}. In Section 5,   we construct well-conditioned bases for both 2D and 3D  partially discontinuous nodal elemets. We end the paper with concluding remarks.

\section{Preliminaries and existing results}

\subsection{Preliminaries}\label{notation}We assume that $\Omega$ is a polygonal or polyhedral domain. 
Given a mesh, we will use $\mathcal{V}$ to denote the set of vertices, $\mathcal{E}$ for edges, $\mathcal{F}$ for faces, and $\mathcal{T}$ for the 3D simplices. We use $V$, $E$, $F$, and $T$ to denote the number of vertices, edges, faces, and tetrahedra, respectively. From Euler's formula, one has $V-E+F=1$ in 2D and $V-E+F-T=1$ in 3D for contractible domains. 

For a triangle $f$, we denote its Clough-Tocher split as $f_{\CT}$, which is the collection of three triangles obtained by connecting the vertices to an interior point of $f$.
For a tetrahedron $K$ in $\mathcal T$,  the Worsey-Farin split of $K$, denoted by $K_{\WF}$, is obtained by adding a vertex to the interior of $K$, connecting this vertex to the vertices of $K$; then adding an interior vertex to each face of $K$, and connecting this point to the vertices of the face and the interior point (see Figure \ref{fig:splits}). Therefore, each face is divided into three triangles like the Clough-Tocher split. The set of the Worsy-Farin splits of all the tetrahedra in $\mathcal T$ is referred to as the Worsey-Farin mesh and denoted by $\mathcal T_{\WF}$. We refer to \cite{fu2020exact,guzman2020exact,lai2007spline} for more details.

We use $\bm{\nu}_{f}$ and $\bm{\tau}_{f}$ to denote the unit normal and tangential vectors of a simplex $f$, respectively. In 2D, the tangential and normal directions of an edge are uniquely defined up to an orientation. For edges in 3D there are one unit tangential and two linearly independent unit normal directions, and for faces in 3D there are one unit normal and two linearly independent unit tangential directions. We will write $\bm{\tau}_{e}$,  $\bm{\nu}_{e, i}$ and $\bm{\nu}_{f}$,  $\bm{\tau}_{f, i}$, $i=1, 2$ for these cases, respectively.

In our discussions, $C^{r}(\mathcal{V})$ includes piecewise smooth functions for which derivatives up to order $r$ coincide at vertices. 
Similarly we can define $C^{r}(\mathcal{E})$ and $C^{r}(\mathcal{F})$, $C^{r}(\mathcal{T})$ for the space of functions with $C^r$ continuity on the edges, faces, and 3D cells, repectively.

In $d$D ($d=2,3$), we define the gradient operator $\grad=\nabla$ and the $\div$ operator $\div=\nabla\cdot$.
In 3D, for $\bm u=(u_1,u_2,u_3)^{\mathrm T}$, we define
\[\curl \bm u=(\partial_{x_2}u_3-\partial_{x_3}u_2, \partial_{x_3}u_1-\partial_{x_1}u_3, \partial_{x_1}u_2-\partial_{x_2}u_1)^{\mathrm T}.\]
In 2D, we define
\[\rot \bm u= \partial_{x_1}u_2-\partial_{x_2}u_1\text{ for }\bm u=(u_1,u_2)^{\mathrm T}\text{ and }\curl u=(\partial_{x_2}u,-\partial_{x_1}u) \text{ for a scalar }u.\]

We review some basic facts about the de Rham complex; further details can be found, for instance, in \cite{Arnold.D;Falk.R;Winther.R.2006a,bott2013differential}. 
The de Rham complex consists of differential forms and exterior derivatives. In 3D, the de Rham complex on $\Omega$ reads
\begin{equation}\label{general-complex}
\begin{tikzcd}
0 \arrow{r} & \mathbb R\arrow{r}{\subset}& H^{1}(\Omega) \arrow{r}{\grad} &H(\curl;\Omega)\arrow{r}{\curl} & H(\div;\Omega)   \arrow{r}{\div} &L^2(\Omega)  \arrow{r}{} & 0,
 \end{tikzcd}
\end{equation}
with $\curl\grad=\div\curl=0$. A complex is called exact at a space if the kernel of the subsequent operator is identical to the range of the previous one. For example,   we say that the complex \eqref{general-complex} is exact at $H(\curl;\Omega)$ if $\ker(\curl,H(\curl))=\ran(\grad,H^{1})$. Here for a linear operator $d: X\to Y$, $\ker(d, X)\subset X$ denotes the kernel of $d$ and $\ran(d, Y)\subset Y$ denotes the range. 
The complex \eqref{general-complex} is called exact if it is exact at all the spaces.

If there are finite element spaces $\Sigma_h\subset H^1(\Omega)$, $V_h\subset H(\curl;\Omega)$, $X_h\subset H(\div;\Omega)$, and $W_h\subset L^2(\Omega)$ such that
\begin{equation}\label{discrete-complex}
\begin{tikzcd}
0 \arrow{r} & \mathbb R\arrow{r}{\subset}& \Sigma_h\arrow{r}{\grad} &V_h\arrow{r}{\curl} & X_h   \arrow{r}{\div} &W_h  \arrow{r}{} & 0
 \end{tikzcd}
\end{equation}  
is a complex, we say that \eqref{discrete-complex} is a finite element subcomplex of \eqref{general-complex}. In this case, a necessary   condition for the exactness  is the  following dimension condition
\begin{align}\label{dimcount}
\dim(\mathbb R)-\dim \Sigma_h+\dim V_h-\dim X_h+\dim W_h=0.
\end{align}

As an example, the Lagrange element space, N\'{e}d\'{e}lec element space of the first kind \cite{Nedelec.J.1980a}, Raviart-Thomas (RT) element space \cite{Nedelec.J.1980a}, and discontinuous element space fit into a discrete de Rham complex. The Lagrange element space, N\'{e}d\'{e}lec element space of the second kind \cite{Nedelec.J.1986a}, Brezzi-Douglas-Marini (BDM) element space \cite{Nedelec.J.1986a}, and discontinuous element space also fit into a discrete de Rham complex. For a detailed discussion of these finite element spaces, see, e.g., \cite{boffi2013mixed}.

We use $\mathcal{P}_p$  to denote the polynomial space of degree $p$ (a nonnegative integer). Denote
 \[\NED_p^1=(\mathcal P_{p-1})^3 + S_p\text{ with }S_p=\{ q\in (\mathcal P_{p})^3:q\cdot \bm x=0\}\] 
as the shape function spaces of the first-kind N\'{e}d\'{e}lec element with polynomial degree $p$, and 
 \[\RT_p=(\mathcal P_{p-1})^3 + \bm x\mathcal P_{p-1},\] 
as the shape function space of the RT element with the polynomial degree $p$. The shape function space of the second-kind N\'{e}d\'{e}lec element and the BDM element with the polynomial degree $p$ are 
\[\NED_p^2=[\mathcal{P}_p]^3\text{ and } \BDM_p=[\mathcal{P}_p]^3,\]
respectively.
We denote the N\'{e}d\'{e}lec finite element space of the first and second kind with the polynomial degree $p$ on mesh $\mathcal T$ as $\NED_p^1(\mathcal T)$ and $\NED_p^2(\mathcal T)$, respectively.

In 2D, the de Rham complex reads
\begin{equation*}
\begin{tikzcd}
0 \arrow{r} & \mathbb R\arrow{r}{\subset}& H^{1}(\Omega) \arrow{r}{\grad} &H(\rot;\Omega)\arrow{r}{\rot} & L^2(\Omega)  \arrow{r}{} & 0,
 \end{tikzcd}
\end{equation*}
or
\begin{equation*}
\begin{tikzcd}
0 \arrow{r} & \mathbb R\arrow{r}{\subset}& H^{1}(\Omega) \arrow{r}{\curl} &H(\div;\Omega)\arrow{r}{\div} & L^2(\Omega)  \arrow{r}{} & 0.
 \end{tikzcd}
\end{equation*}

 {
\begin{center}
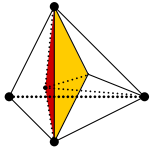
\begin{figure}
\setlength{\unitlength}{1.2cm}
\begin{picture}(2,2)(5,0)

\put(6,0){
\begin{picture}(2,2)


{
\color{yellow}\polygon*(0, 0)(0, 1.5)(0.368, 0.748)
\color{red}\polygon*(0, 0)(0, 1.5)(-0.1, 0.6)
}

\put(-0.5, 0.5){\circle*{0.1}}
\put(1, 0.5){\circle*{0.1}}
\put(0, 1.5){\circle*{0.1}}
\put(0, 0){\circle*{0.1}}
\put(0, 0){\line(-1, 1){0.5}}
\put(0, 0){\line(2, 1){1}}

\put(1, 0.5){\line(-1, 1){1}}
\put(-0.5, 0.5){\line(1, 2){0.5}}
\put(0, 0){\line(0, 1){1.5}}
\put(-0.1, 0.6){\circle*{0.05}}

\multiput(-0.5,0.5)(0.05,0){30}{\circle*{0.03}}

\put(0, 0){\line(1, 2){0.38}}
\put(0, 1.5){\line(1, -2){0.38}}
\put(1, 0.5){\line(-5, 2){0.62}}
\multiput(-0.1, 0.6)(0.05,0.015){10}{\circle*{0.02}}
\multiput(-0.1, 0.6)(0.005, 0.05){18}{\circle*{0.02}}
\multiput(-0.1, 0.6)(0.008, -0.05){12}{\circle*{0.02}}
\multiput(-0.1, 0.6)(0.05, -0.004){20}{\circle*{0.02}}
\end{picture}

}

\end{picture}
\caption{The Worsey-Farin split of a tetrahedron (relative to one face)}
 \label{fig:splits}
\end{figure}
\end{center}
}
\subsection{Existing results} 
In this section, we briefly review the constructions in 2D in \cite{christiansen2016nodal} and some questions that were left open there. 

In 2D, the Stenberg $H(\div)$ element can be obtained by breaking the tangential component of a vector-valued  Lagrange element on each interior edge of the mesh, i.e., the tangential trace of the functions in the Stenberg $H(\div)$ element space from two sides of an edge can be different. Therefore, the continuity of the Stenberg element is in between the corresponding vector-valued $C^{0}$ Lagrange element and the discontinuous element (see Figure \ref{fig:cdg}). The Stenberg element fits in a discrete de~Rham complex in 2D, which begins with the Hermite element taking vertex derivatives as degrees of freedom \cite{christiansen2016nodal}.

\begin{figure}[htbp]
\begin{center}
\includegraphics[width=0.68\linewidth]{./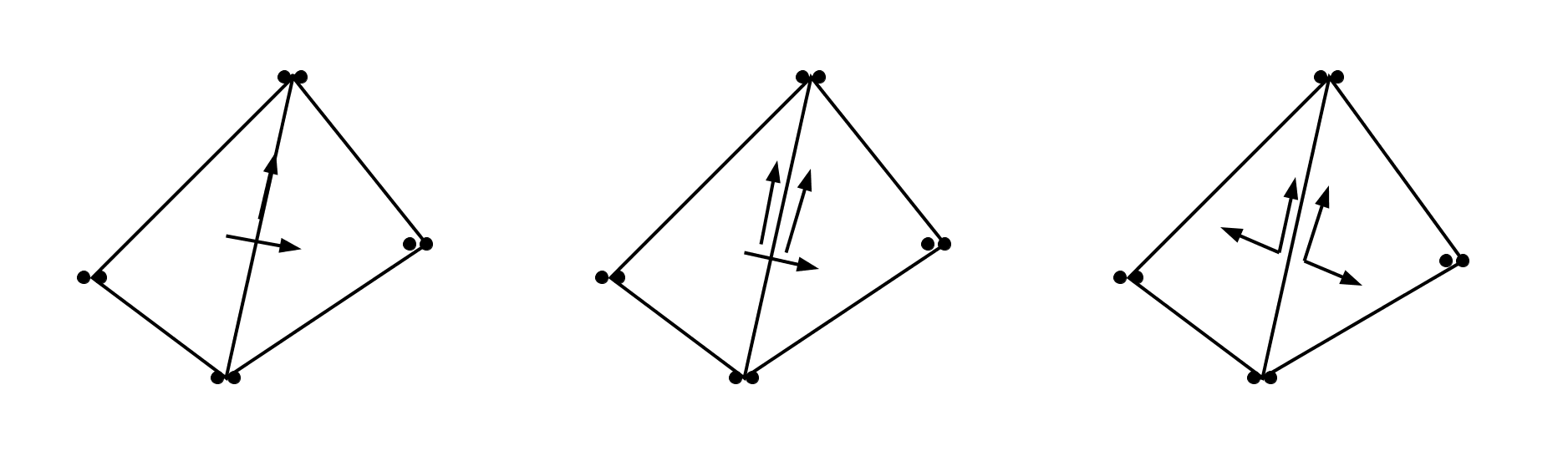}
\caption{Continuous, partially discontinuous elements, and discontinuous. Left: vector-valued Lagrange elements. Both the tangential and the normal components are continuous across edges. Middle: partially discontinuous nodal elements for $H(\div)$ (Stenberg element). The normal component is continuous while the tangential component is discontinuous in general. Right: discontinuous elements. Both tangential and normal components are discontinuous in general.}
\label{fig:cdg}
\end{center}
\end{figure}

The 2D  Stenberg $H(\div)$   element  can be extended to 3D (see \cite[Section 3.5]{christiansen2016nodal} and $\mathcal{W}_{p+1}^{2}$ in \eqref{complex-HZ}). However, the complex containing it was not discussed in \cite{christiansen2016nodal}.

The construction in \cite{christiansen2016nodal} contains two complexes in 3D: one begins with the Hermite element, and another begins with a $C^{0}$ element with second order supersmoothness at the vertices (formally, $C^{2}(\mathcal{V})$, although the function is not $C^{2}$). The latter can be viewed as a 3D extension of the Falk-Neilan complex for the Stokes problem \cite{falk2013stokes} in terms of supersmoothness. The $H(\curl)$ element in the former complex does not admit nodal basis, while the $H(\curl)$ element in the latter complex can be represented in terms of scalar Hermite bases. Nevertheless, it was left open in \cite{christiansen2016nodal} to construct a sequence containing an $H(\curl)$ element with Lagrange bases.
We solve these problems in this paper.

\section{$H(\curl)$ nodal elements in three space dimensions}\label{sec:curl}

\subsection{Motivation}
A natural candidate for a partially discontinuous nodal $H(\curl)$ element is obtained by breaking the normal components of the vector-valued Lagrange  finite element on each face (at the same time, the $C^{0}$ continuity at vertices and on edges is retained):
$$
Z_{h}^{p}:= \NED_p^2(\mathcal T)\cap C^{0}(\mathcal{V})\cap C^{0}(\mathcal{E}).
$$
 This element is in between the $C^{0}$ vector-valued Lagrange element and the N\'ed\'elec element in terms of continuity. Nevertheless, this element unlikely fits in a finite element de Rham complex due to the following observation. In most existing cases, the ``restriction'' (precise definition discussed in \cite{christiansen2016generalized}) of a higher-dimensional finite element (complex) to a lower-dimensional cell is a lower-dimensional finite element (complex). This holds for the standard finite element differential forms \cite{arnold2018finite}, as well as all the examples in \cite{christiansen2016nodal}. Therefore, to construct a complex in 3D, we want the restrictions of the involved spaces and the degrees of freedom on each face to form a 2D complex as well. Nevertheless, this is unlikely for the case of $Z_{h}^{p}$.
Because the restriction of $Z_{h}^{p}$ on each face is a 2D Lagrange element, which unlikely fits in a de~Rham complex, as reflected in the deficiency of $Z_{h}^{p}$ (at least for low order cases) as a discretization for the Stokes problem, c.f.,  \cite{boffi2013mixed,guzman2017scott,scott1985norm}. 

Although the above observation already suggests that $Z_{h}^{p}$ may not be a good candidate for an $H(\curl)$ nodal element, one may still ask if $Z_{h}^{p}$, as a space in between the vector-valued Lagrange element (which produces spurious solutions) and the N\'ed\'elec element (which excludes spurious solutions), suffers from spurious solutions for eigenvalue problems. To the best of our knowledge, the emergence of spurious solutions for eigenvalue problems is not predicted by theory in most cases. Therefore we answer this question with numerical tests.

We apply the $H(\rot)$ version of the Stenberg element in 2D defined by 
$$
Y_{h}^{p}:=\NED_p^2(\mathcal T)\cap C^{0}(\mathcal{V}),
$$
 and $Z_{h}^{p}$ in 3D to solve the following Maxwell eigenvalue problem on the domain $\Omega=(0,\pi)^d$, $d=2,3$:
\begin{equation}\label{maxwell}
	\begin{aligned}
		\curl\curl \bm u=\lambda \bm u \ \text{in } \Omega,\\
		\div\bm u=0 \text{ in }\Omega,\\
		\bm u\times\bm n =0 \text{ on }\partial \Omega.
	\end{aligned}
\end{equation}
In 2D, the eigenvalues of this problems are $m^2+n^2$ \cite{arnold2018finite}, and the corresponding eigenvectors are 
\[\bm u(x_1,x_2)=\left(\begin{matrix}
	n\sin(mx_1)\cos(nx_2)\\
	-m\cos(mx_1)\sin(nx_2)
\end{matrix}\right),\]
where $m,n$ are nonnegative integers, not both 0.
In 3D, the eigenvalues of this problems are $m^2+n^2+l^2$, and the corresponding eigenvectors are
\[\bm u(x_1,x_2,x_3)=\left(\begin{matrix}
	(l-n)\cos(mx_1)\sin(lx_3)\sin(nx_2)\\
	(m-l)\cos(nx_2)\sin(mx_1)\sin(lx_3)\\
	(n-m)\cos(lx_3)\sin(mx_1)\sin(nx_2)
	\end{matrix}\right),\] 
where $m,n,l$ are nonnegative integers, not all 0.

We first apply the Stenberg $H(\rot)$ finite element space $Y_{h}^{p}$ to solve the problem \eqref{maxwell} in 2D. The numerical results are shown in Table \ref{tab5}. 
\begin{table}[h]
	\centering
	\caption{Numerical eigenvalues for 2D \eqref{maxwell} by the 2nd-order vector-valued Lagrange element  and the Stenberg $H(\rot)$ element } \label{tab5}
	\begin{tabular}{ccccccccccccc} 
		\hline
		exact&1&1&2&4&4&5&5&8&9&9\\
		\hline
		$H^1$ & 1.0000 &1.0000& 1.1375&2.0000 &2.5668& 3.1367&4.0000&4.0000&4.1947&5.0000\\
		$H(\rot)$& 1.0000 &1.0000&2.0000&4.0000 &4.0000& 5.0000&5.0000&8.0000&9.0001&9.0001\\
		\hline
	\end{tabular}
\end{table}
From the table, we see that spurious solutions do not appear. Therefore the vector-valued Lagrange elements are remedied by breaking the normal degrees of freedom on edges.
Then   we use $Z_{h}^{p}$ to solve the problem \eqref{maxwell} for $d=3$. Table \ref{tab2} shows the numerical  eigenvalues around 3. In this case, spurious solutions occur. Therefore relaxing the normal continuity on faces (equivalently, enriching face-based bubbles) on the vector-valued Lagrange element is not enough to give a proper discretization of the Maxwell equations.
\begin{table}[h]
	\centering
	\caption{Numerical eigenvalues for 3D \eqref{maxwell} by the 3rd-order vector-valued Lagrange elements with discontinuous normal components} \label{tab2}
	\begin{tabular}{ccccccccccccc} 
		\hline
		exact&1&1&1&2&2&2&3&4&4&4\\ \hline
		numerical&2.9924  &  2.9960   & 2.9972   & 3.0000  &  3.0000&    3.0042  &  3.0057  &  3.0059 &   3.0069 &   3.0158\\
		\hline
		
		\hline
	\end{tabular}
\end{table}

Now both heuristic argument and numerical experiments inspire us to seek a different construction other than $Z_{h}^{p}$. We cannot further relax the vertex and edge continuity, which is required to ensure the existence of nodal bases. Motivated by the heuristic argument on the face modes, now we plan to construct an element such that its restriction on each face is a proper velocity finite element for the Stokes problem. Recall that the $\mathcal{P}_{2}$-$\mathcal{P}_{1}$ Scott-Vogelius pair on the Clough-Tocher split of triangular meshes (see Figure \ref{fig:CT}) leads to a complex and is thus stable  \cite{arnold1992quadratic,neilan2020stokes}. This motivates us to use the Worsey-Farin split of a tetrahedron \cite{lai2007spline,worsey1987ann}, which, restricted on each face of the macro tetrahedron, is a Clough-Tocher split.

\begin{center}
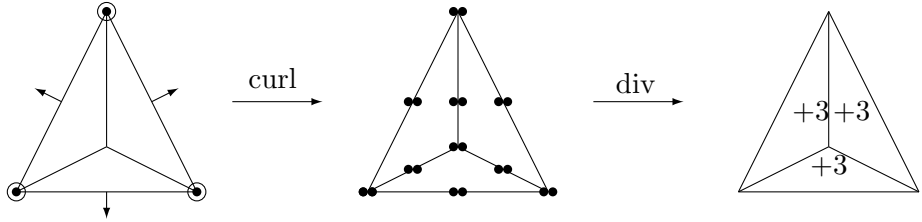
\begin{figure}
\setlength{\unitlength}{1.2cm}
\begin{picture}(2,2)(3,0)
\put(0,0){
\begin{picture}(2,2)
\put(-1, 0){\line(1,2){1}} 
\put(0, 2){\line(1,-2){1}}
\put(-1,0){\line(1,0){2}}
\put(-1.,0){\circle*{0.1}}
\put(-1.,0){\circle{0.2}}
\put(1.,0){\circle*{0.1}}
\put(1.,0){\circle{0.2}}
\put(0,2){\circle*{0.1}}
\put(0,2){\circle{0.2}}
\put(0, 0.5){\line(0, 1){1.5}}
\put(0, 0.5){\line(2, -1){1}}
\put(0, 0.5){\line(-2, -1){1}}
\put(0.5, 1){\vector(2,1){0.3}}
\put(-0.5, 1){\vector(-2,1){0.3}}
\put(0, 0){\vector(0,-1){0.3}}
\end{picture}
}

\put(1.5, 1){\vector(1, 0){1}}
\put(1.68, 1.15){$\curl$}

\put(4,0){
\put(-1, 0){\line(1,2){1}} 
\put(0, 2){\line(1,-2){1}}
\put(-1,0){\line(1,0){2}}
\put(-0.05,2){\circle*{0.1}}
\put(0.05,2){\circle*{0.1}}
\put(-1.05,0){\circle*{0.1}}
\put(-0.95,0){\circle*{0.1}}
\put(-1.05,0){\circle*{0.1}}
\put(-0.95,0){\circle*{0.1}}
\put(1.05,0){\circle*{0.1}}
\put(0.95,0){\circle*{0.1}}
\put(0.55, 1){\circle*{0.1}}
\put(0.45, 1){\circle*{0.1}}
\put(-0.55, 1){\circle*{0.1}}
\put(-0.45, 1){\circle*{0.1}}

\put(-0.05, 1){\circle*{0.1}}
\put(0.05, 1){\circle*{0.1}}
\put(-0.05, 0){\circle*{0.1}}
\put(0.05, 0){\circle*{0.1}}
\put(0, 0.5){\line(0, 1){1.5}}
\put(0, 0.5){\line(2, -1){1}}
\put(0, 0.5){\line(-2, -1){1}}
\put(-0.45, 0.25){\circle*{0.1}}
\put(-0.55, 0.25){\circle*{0.1}}
\put(0.45, 0.25){\circle*{0.1}}
\put(0.55, 0.25){\circle*{0.1}}
\put(0.05, 0.5){\circle*{0.1}}
\put(-0.05, 0.5){\circle*{0.1}}
}

\put(5.5, 1){\vector(1, 0){1}}
\put(5.75, 1.1){{$\div$}}

\put(8,0){
\begin{picture}(2,2)
\put(-1, 0){\line(1,2){1}} 
\put(0, 2){\line(1,-2){1}}
\put(-1,0){\line(1,0){2}}
\put(0, 0.5){\line(0, 1){1.5}}
\put(0, 0.5){\line(2, -1){1}}
\put(0, 0.5){\line(-2, -1){1}}
\put(-0.2, 0.2){+3}
\put(-0.4, 0.8){+3}
\put(0.05, 0.8){+3}
\end{picture}
}

\end{picture}
\caption{Clough-Tocher complex with discontinuous pressure.\newline
The figure shows the lowest order case: the first space is piecewise cubic, the second is continuous piecewise quadratic and the third is piecewise linear. }
\label{fig:CT}
\end{figure}
\end{center}

\begin{figure}[htbp]

\includegraphics[width=0.68\linewidth]{./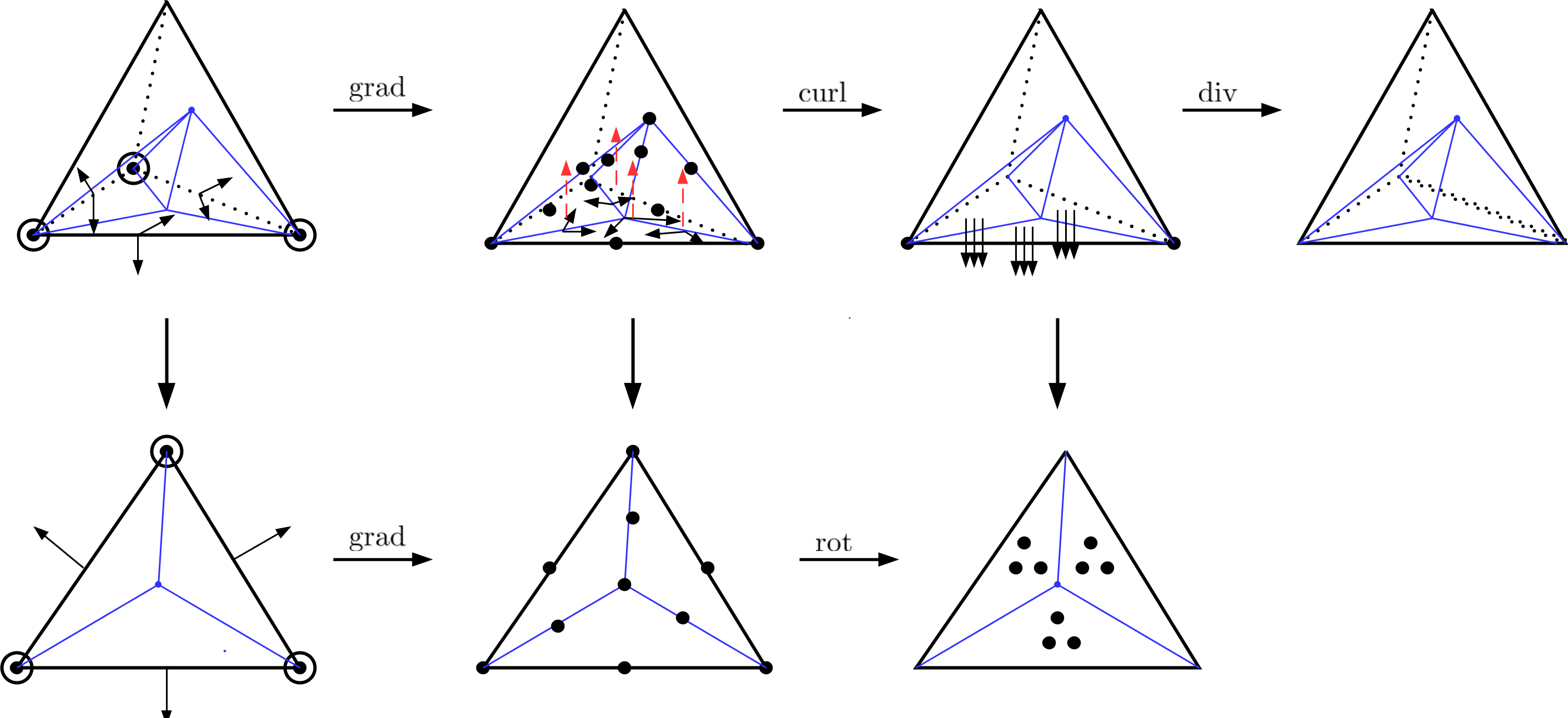}
\caption{The complex containing the 3D $H(\curl)$ nodal element and its 2D restriction. For $H(\rot)$/$H(\curl)$/$H(\div)$ elements, a dot $\bullet$ represents the evaluation of the vector-valued function, which counts as two/three degrees of freedom. In the 3D $H(\curl)$ element, the four red dashed lines indicate the normal components associated with the four Lagrange points on the face. They are counted as interior degrees of freedom and do not yield inter-element continuity.}
\label{fig:curl}
\end{figure}

\subsection{Construction}
We consider a complex  for $p\geq 3$ as follows:
\begin{equation}\label{seq:curl}
\begin{tikzcd}
0  \arrow{r}& \mathbb{R} \arrow{r}{\subset} &\mathcal V_p^0(\mathcal T)\arrow{r}{\grad} & \mathcal V_{p-1}^1(\mathcal T) \arrow{r}{\curl} &\mathcal V_{p-2}^2(\mathcal T_{\WF})\arrow{r}{\div} & \mathcal V_{p-3}^3(\mathcal T_{\WF})\arrow{r} & 0,
\end{tikzcd}
\end{equation}
where each space is constructed on the Worsey-Farin split. The Worsey-Farin $C^{1}$ element \cite{worsey1987ann} requires a mesh alignment, the subdivision point on each face of the macro tetrahedron lies on the line connecting the subdivision points of the two macro tetrahedra sharing the face. Nevertheless, such a condition is not required in the construction below.

\paragraph{{Space $\mathcal V_p^0(\mathcal T)$.}}

The scalar space $\mathcal V_p^0(\mathcal T)$ is a high order generalization of the Worsey-Farin macroelement from \cite{worsey1987ann}. The shape function space on  macroelement $K$ consists of $C^1$ piecewise polynomials of degree $p$ on the Worsey-Farin split, i.e.
\[\mathcal V_p^0(K)=\{u\in C^1(K): u|_{K_i}\in \mathcal P_p(K_i),\text{ for } K_i\in K_{\WF}\text{ with }\cup_{i=1}^4K_i=K\}.\]
 Across the macroelements, the functions in  $\mathcal V_p^0(\mathcal T)$ are of $C^{1}$ continuity at vertices and edges, and of $C^{0}$ continuity across faces, that is
\[\mathcal V_p^0(\mathcal T)=\{u\in H^1(\Omega): u\in C^1(\mathcal E)\cap C^1(\mathcal V), \ u|_K\in \mathcal V_p^0(K)\text{ for }  K\in\mathcal T\}.\]

For the lowest order case ($p=3$, c.f., \cite{lai2007spline}), if the mesh alignment condition holds, then the functions are also of $C^{1}$ continuity across the macroelements. The subsequent discussions do not rely on such $C^{1}$ smoothness across macroelements, therefore the mesh alignments are not required. 

For $u\in \mathcal V_p^0(\mathcal T)$, the degrees of freedom can be given by
\begin{itemize}
\item
function value and the first order derivatives of $u$  at each vertex $\bm x\in \mathcal{V}$,
$$
u(\bm x), \quad \partial_{i}u(\bm x), \quad i=1, 2, 3;
$$
\item
moments of $u$ on each edge $e\in \mathcal E$,
$$
\int_{e}u\cdot q\, \mathrm{d}\bm x, \quad q\in \mathcal{P}_{p-4}(e);
$$
\item
moments of the normal derivatives of $u$ on each edge $e\in \mathcal E$,
$$
\int_{e}\partial_{\bm{\nu}_{e,1}}u\cdot q_{1}\, \mathrm{d}\bm x,\quad \int_{e}\partial_{\bm{\nu}_{e,2}}u\cdot q_{2}\, \mathrm{d}\bm x, \quad q_{1}, q_{2}\in \mathcal{P}_{p-3}(e);
$$
\item
moments of $\grad_f u$ on each face $f\in \mathcal F$,
$$
\int_{f} \grad_fu\cdot q\, \mathrm d \bm x, \quad q\in \grad B_{p}({f}_{\mathrm{CT}}),
$$
where the space $B_p(f_{\mathrm{CT}}):=\left \{w\in C^1(f): w|_{f_i}\in \mathcal{P}_{p} \text{ for all } f_i\in f_{\CT}\text{ with }\right.$\\$\left.\cup_{i=1}^3 f_i=f\text{ and } u=\partial_n u=0 \mbox{ on } \partial f\right \}$ consists of the bubble functions of the 2D Clough-Tocher element on the face subdivision $f_{\mathrm{CT}}$, see the first element in Figure \ref{fig:CT};
\item 
interior degrees of freedom on $K\in\mathcal T$,
$$
\int_{K} u q\, \mathrm d\bm x, \quad q\in B_{p}({K}_{\mathrm{WF}}),
$$
where $B_{p}(K_{\mathrm{WF}}):=\left \{w\in C^1(K): w|_{K_i}\in \mathcal{P}_{p} \text{ for all } K_i\in K_{\WF} \text{ with }\right.$ \\$\left.\cup_{i=1}^4K_i=K \text{ and } u=0, \grad u=0 \mbox{ on } \partial K\right \}$.
\end{itemize}

The dimension of the shape function space $\mathcal V_p^0(K)$ has been given in \cite[Theorem 3.5]{schenck2018subdivision} with a homological approach, that is, 
\begin{align}\label{total-dim}
{p+3 \choose 3} +3{p-1 \choose 3}+8{p\choose 3}= 2(p^{3}-3p^{2}+5p-1).
\end{align}
Note that the above degrees of freedom at vertices, edges, and faces are linearly independent since they are a part of the degrees of freedom for defining the $C^1$ element in \cite{guzman2020exact}. Therefore the unisolvence holds and the dimension of the bubble space (number of interior degrees of freedom) is equal to the total dimension of \eqref{total-dim} subtracting the number of degrees of freedom at vertices,  edges, and faces. The number of face degrees of freedom, i.e., the dimension of $B_{p}(f_{\mathrm{CT}})$ can also be derived in a similar way based on the dimension formula of the 2D Clough-Tocher macroelement, c.f., \cite{christiansen2016generalized,schenck2018subdivision}. 

Therefore, we obtain the following dimension count of $\mathcal V_p^0(\mathcal T)$:
 $$
 4V+[(p-3)+2(p-2)]E+[3/2(p^{2}-p+2)-(6p-6)]F+2(p-1)(p-2)(p-3)T.
$$

\begin{remark}\label{rmk:supersmoothness}
As a $C^{1}$ tetrahedral spline, the shape function space of $\mathcal V_p^0(K)$ has been studied with the Bernstein-B\'{e}zier technique (see \cite{lai2007spline} for example).  In \cite{lai2007spline}, this space is defined with the $C^{2}$ continuity at the interior subdivision point. Nevertheless, the $C^{2}$ condition is intrinsic supersmoothness \cite{floater2018super}, meaning that it can be removed from the definition and the $C^{1}$ continuity would automatically imply the $C^{2}$ condition at the subdivision point. We refer to \cite{floater2018super} and the references therein for more details on supersmoothnesses. 
\end{remark}

\paragraph{Space $\mathcal{V}_{p}^{1}(\mathcal T)$.}
The shape function space on macroelement $K$ is define as 
\[\mathcal V_p^1(K)=\{\bm u\in C^0(K):\bm u|_{K_i}\in \mathcal P_p,\ \forall K_i\in K_{\WF}\text{ with } \cup_{i=1}^4K_i=K\}.\]
Then the space $\mathcal{V}_{p}^{1}(\mathcal T)$ is defined as 
\[\mathcal V_p^1(\mathcal T)=\{\bm u\in H(\curl;\Omega): \bm u\in C^0(\mathcal E)\cap C^0(\mathcal V),\ \bm u|_K\in \mathcal V_p^1(K), \forall K\in \mathcal T\}\]

The degrees of freedom are given by
\begin{itemize}
\item
function value of each component $u_i$ of $\bm u$ at each vertex $\bm x\in \mathcal{V}$,
$$
u_{i}(\bm x), \quad i=1, 2, 3;
$$
\item
moments of each component $u_i$ of $\bm u$ on each edge $e\in\mathcal E$,
$$
\int_{e}u_{i}\cdot q\, \mathrm{d}\bm x, \quad q\in \mathcal{P}_{p-2}(e), i=1, 2, 3;
$$
\item
moments of the tangential components of $\bm u$ on each face $f\in\mathcal F$,
$$
\int_{f}(\bm{u}\times \bm{\nu}_f)\cdot \bm{q}\, \mathrm{d}\bm x, \quad q\in \left [B^0_{p}(f_{\CT})\right ]^{2}, 
$$
where $B_p^0(f_{\mathrm{CT}}):=\left \{w\in C^{0}(f): w|_{f_i}\in \mathcal{P}_{p} \text{ for all } f_i\in f_{\CT} \text{ with }\cup_{i=1}^3f_i=f\right.$\\ $\left.\text{and } u=0 \mbox{ on } \partial f\right\}$ and $\bm{u}\times \bm{\nu}_f$ is considered as a 2D vector field;
\item 
interior degrees of freedom.
\end{itemize}
The interior degrees of freedom consists of function values at $3[1+8(p-1)+9(p-1)(p-2)+2(p-1)(p-2)(p-3)]=6p^3-9p^2+9p-3$ interior Lagrange points of each $K\in\mathcal T$
and the values of the normal component at $1+3(p-1)+3/2(p-1)(p-2)=(3p^2-3p+2)/2$ Lagrange points on each face of  $K$.
Therefore, the dimension of $\mathcal{V}_{p}^{1}(\mathcal T)$ is 
$$
3V+3(p-1)E+(3p^{2}-3p+2)F+(6p^3-3p^2+3p+1)T.
$$

\paragraph{Space $\mathcal{V}_{p}^{2}(\mathcal T_{\WF})$.}

From this step, the complex branches into the standard finite element de Rham complex of discrete differential forms \cite{Arnold.D;Falk.R;Winther.R.2006a} on the Worsey-Farin mesh. 

More precisely, the space $\mathcal{V}_{p}^{2}(\mathcal T_{\WF})$ is the BDM element on the Worsey-Farin mesh of the domain $\Omega$, i.e.,
\[\mathcal V_p^2(\mathcal T_{\WF})=\{u\in H(\div;\Omega): u|_{K_i}\in \mathcal P_p,\text{ for } K_i\in K_{\WF}\text{ and } \cup_{i=1}^4 K_i=K\in\mathcal T\}.\]
Since each macro tetrahedron $K$ is divided into $12$ micro tetrahedra, there are $18$ micro faces in the interior of $K$ and $3$ micro faces on each face of $K$. As a result, it follows from the definition of the BDM element that 
the dimension count of $\mathcal{V}_{p}^{2}(\mathcal T_{\WF})$ is $$3/2(p+2)(p+1)F+(9(p+1)(p+2)+6(p+2)(p+1)(p-1))T.$$

\paragraph{Space $\mathcal{V}_{p}^{3}(\mathcal T_{\WF})$.}

The space  $\mathcal{V}_{p}^{3}(\mathcal T_{\WF})\subset L^2(\Omega)$ consists of piecewise polynomials of degree $p$ on the Worsey-Farin mesh $\mathcal T_{\WF}$ of $\Omega$. 
The dimension of $\mathcal{V}_{p}^{3}(\mathcal T_{\WF})$ is $2(p+3)(p+2)(p+1)T$.

\begin{lemma}
For $p\geq 3$, the sequence \eqref{seq:curl}
is a complex and exact on contractible domains.
\end{lemma}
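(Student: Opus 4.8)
The plan is to verify in turn that \eqref{seq:curl} is a complex and that its cohomology vanishes on a contractible $\Omega$. The complex property follows from the prescribed continuity and polynomial degrees. If $u\in\mathcal V_p^0(\mathcal T)$, then $u\in H^1(\Omega)$ gives $\grad u\in H(\curl;\Omega)$ with $\curl\grad u=0$; the $C^1$ continuity of $u$ inside each macroelement $K$ makes $\grad u$ continuous there, $u\in C^1(\mathcal V)\cap C^1(\mathcal E)$ makes $\grad u\in C^0(\mathcal V)\cap C^0(\mathcal E)$, and $\grad u|_K$ is piecewise $\mathcal P_{p-1}$ on $K_{\WF}$, so $\grad u\in\mathcal V_{p-1}^1(\mathcal T)$. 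For $\bm u\in\mathcal V_{p-1}^1(\mathcal T)\subset H(\curl;\Omega)$ one gets $\curl\bm u\in H(\div;\Omega)$ with $\div\curl\bm u=0$ and $\curl\bm u$ piecewise $\mathcal P_{p-2}$ on $\mathcal T_{\WF}$, hence $\curl\bm u\in\mathcal V_{p-2}^2(\mathcal T_{\WF})$. Finally $\div$ maps the piecewise $\mathcal P_{p-2}$ space $\mathcal V_{p-2}^2(\mathcal T_{\WF})$ into the piecewise $\mathcal P_{p-3}$ space $\mathcal V_{p-3}^3(\mathcal T_{\WF})$, and $\grad$ kills constants. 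Unwinding definitions also yields two inclusions: $\mathcal V_p^0(\mathcal T)$ sits in the ordinary degree-$p$ Lagrange space $\mathcal L_p(\mathcal T_{\WF})$ ($C^1$ inside each $K$ plus $C^0$ across macro faces forces global $C^0$), and $\mathcal V_{p-1}^1(\mathcal T)\subset\NED_{p-1}^2(\mathcal T_{\WF})$ ($C^0$ inside $K$ handles the interior micro faces, and $H(\curl;\Omega)$-conformity handles the micro faces lying on macro faces). Since $\mathcal V_{p-2}^2(\mathcal T_{\WF})=\BDM_{p-2}(\mathcal T_{\WF})$ and $\mathcal V_{p-3}^3(\mathcal T_{\WF})$ is the discontinuous $\mathcal P_{p-3}$ space by construction, the last two arrows of \eqref{seq:curl} form the tail of the ordinary finite element de~Rham complex $\mathcal L_p(\mathcal T_{\WF})\xrightarrow{\grad}\NED_{p-1}^2(\mathcal T_{\WF})\xrightarrow{\curl}\BDM_{p-2}(\mathcal T_{\WF})\xrightarrow{\div}\mathcal V_{p-3}^3(\mathcal T_{\WF})\to 0$ on the simplicial mesh $\mathcal T_{\WF}$, which is exact on contractible domains for $p\geq3$ (see, e.g., \cite{Arnold.D;Falk.R;Winther.R.2006a}).

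For exactness I would argue it at every space except $\mathcal V_{p-2}^2(\mathcal T_{\WF})$ directly. Exactness at $\mathbb R$ is injectivity of the inclusion; exactness at $\mathcal V_p^0(\mathcal T)$ is that $\grad u=0$ implies $u$ constant, valid because $\Omega$ is connected; exactness at $\mathcal V_{p-3}^3(\mathcal T_{\WF})$ is the surjectivity of $\div\colon\BDM_{p-2}(\mathcal T_{\WF})\to\mathcal V_{p-3}^3(\mathcal T_{\WF})$, part of the exactness of the ordinary complex above. The substantive case is exactness at $\mathcal V_{p-1}^1(\mathcal T)$: given $\bm u\in\mathcal V_{p-1}^1(\mathcal T)$ with $\curl\bm u=0$, the inclusion $\mathcal V_{p-1}^1(\mathcal T)\subset\NED_{p-1}^2(\mathcal T_{\WF})$ and the exactness of the ordinary complex yield $\phi\in\mathcal L_p(\mathcal T_{\WF})$ with $\grad\phi=\bm u$; it then remains to bootstrap the smoothness of $\phi$. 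Inside each $K$ the gradient $\grad\phi=\bm u$ is continuous, so the piecewise polynomial $\phi|_K$ is $C^1$ on $K$, i.e.\ $\phi|_K\in\mathcal V_p^0(K)$ (the $C^2$ condition at the subdivision point being automatic, cf.\ Remark \ref{rmk:supersmoothness}); and since $\bm u\in C^0(\mathcal V)\cap C^0(\mathcal E)$, the value of $\grad\phi$ is single-valued at every vertex and along every edge, so $\phi\in C^1(\mathcal V)\cap C^1(\mathcal E)$. Hence $\phi\in\mathcal V_p^0(\mathcal T)$ and $\bm u=\grad\phi\in\ran(\grad,\mathcal V_p^0(\mathcal T))$.

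To close the gap I would use the Euler characteristic: for a complex of finite-dimensional spaces the alternating sum of the dimensions equals the alternating sum of the dimensions of the cohomology groups. Substituting the degrees $p,p-1,p-2,p-3$ into the dimension formulas recorded above and using $V-E+F-T=1$, a direct computation gives
\[
\dim\mathbb R-\dim\mathcal V_p^0(\mathcal T)+\dim\mathcal V_{p-1}^1(\mathcal T)-\dim\mathcal V_{p-2}^2(\mathcal T_{\WF})+\dim\mathcal V_{p-3}^3(\mathcal T_{\WF})=0,
\]
and, together with exactness at the four other spaces, this forces the remaining cohomology, at $\mathcal V_{p-2}^2(\mathcal T_{\WF})$, to vanish; equivalently $\ran(\curl,\mathcal V_{p-1}^1(\mathcal T))=\ker(\div,\mathcal V_{p-2}^2(\mathcal T_{\WF}))$. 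I expect the crux to be twofold: recognizing the subcomplex structure and carrying out the regularity bootstrap of the second paragraph (that a potential handed to us by the merely-$C^0$ Lagrange complex automatically inherits the micro-face $C^1$ smoothness and the vertex/edge supersmoothness built into $\mathcal V_p^0$), and getting the dimension identity right (the shifted substitutions and the cancellation against $V-E+F-T=1$ are error-prone). A direct proof of exactness at $\mathcal V_{p-2}^2(\mathcal T_{\WF})$ is possible but less convenient, since it would require correcting an $\NED_{p-1}^2(\mathcal T_{\WF})$ potential by a gradient so as to land inside the smaller space $\mathcal V_{p-1}^1(\mathcal T)$.
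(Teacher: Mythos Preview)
Your argument is correct and follows essentially the same route as the paper: both identify \eqref{seq:curl} as a subcomplex of the standard Lagrange--N\'ed\'elec--BDM--DG complex on $\mathcal T_{\WF}$, use that complex for the surjectivity of $\div$, obtain exactness at $\mathcal V_{p-1}^1(\mathcal T)$ by lifting a curl-free field to a Lagrange potential and then bootstrapping the $C^1(\mathcal T)\cap C^1(\mathcal V)\cap C^1(\mathcal E)$ regularity from the $C^0$ properties of $\bm u$, and close with the dimension identity \eqref{dimcount} for exactness at $\mathcal V_{p-2}^2(\mathcal T_{\WF})$. Your write-up is somewhat more explicit (you spell out the complex verification and the regularity bootstrap in more detail), but there is no substantive difference.
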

\begin{proof}
By construction,  \eqref{seq:curl} is a complex. It is sufficient to show that \eqref{seq:curl} is exact on contractible domains.  Since the last two spaces $ \mathcal{V}_{p-2}^{2}(\mathcal T_{\WF})$ and $\mathcal{V}_{p-3}^{3}(\mathcal T_{\WF})$ coincides with the BDM and the DG elements in the standard finite element de Rham complexes on the Worsey-Farin mesh \cite{Arnold.D;Falk.R;Winther.R.2006a}, the operator  $\div$ is onto.  This verifies the exactness at the space $\mathcal{V}_{p-3}^{3}(\mathcal T_{\WF})$.  The kernel of the operator $\grad$ is constants, which shows the exactness at the space $\mathcal V_p^0(\mathcal T)$. 
By definition, 
$$
\mathcal{V}_{p-1}^{1}(\mathcal T)=\NED_{p-1}^2(\mathcal T_{\WF})\cap C^{0}(\mathcal{V})\cap C^{0}(\mathcal{E})\cap C^{0}(\mathcal{T}),
$$
where $\NED_{p-1}^2(\mathcal T_{\WF})$ is the second kind N\'ed\'elec finite element space on the Worsey-Farin mesh of $\Omega$. 
Therefore, the kernel space of the operator $\curl$ is as follows
\begin{align}\label{kernel-curl}
\ker\left (\curl, \mathcal{V}_{p-1}^{1}(\mathcal T)\right )=\grad\left [\Sigma_h^p(\mathcal T_{\WF})\cap C^{1}(\mathcal{V})\cap C^{1}(\mathcal{E})\cap C^1(\mathcal T)\right ],
\end{align}
where $\Sigma_h^p(\mathcal T_{\WF})$ is the Lagrange element space of degree $p$ on the Worsey-Farin mesh of $\Omega$. 
The space $\Sigma_h^p(\mathcal T_{\WF})\cap C^{1}(\mathcal{V})\cap C^{1}(\mathcal{E})\cap C^1(\mathcal T)$ coincides with $\mathcal V_p^0(\mathcal T)$, which shows the exactness at $\mathcal{V}_{p-1}^{1}(\mathcal T)$. Then a dimension count in the sense of \eqref{dimcount} proves the exactness at $\mathcal{V}_{p-2}^{2}(\mathcal T_{\WF})$. 

\end{proof}

\section{$H(\div)$ nodal elements in three space dimensions}\label{sec:div}

In this section, we construct a discrete de Rham complex containing the partially discontinuous $H(\div)$ finite element  from \cite[Section 3.5]{christiansen2016nodal}. 
In particular, we will design a complex for $p\geq 2$ as follows
\begin{equation}\label{complex-HZ}
\begin{tikzcd}
0 \arrow{r} &\mathbb{R} \arrow{r}{\subset} & \mathcal{W}_{p+3}^{0}(\mathcal T)   \arrow{r}{\grad} &  \mathcal{W}_{p+2}^{1}(\mathcal T)    \arrow{r}{\curl} &  \mathcal{W}_{p+1}^{2}(\mathcal T)   \arrow{r}{\div} &  \mathcal{W}_{p}^{3}(\mathcal T)   \arrow{r}& 0. 
\end{tikzcd}
\end{equation}

\paragraph{Space $\mathcal{W}^{0}_{p}(\mathcal T)$.}
For $p\geq 4$, the finite element space 
$$
\mathcal{W}^{0}_{p}(\mathcal T):=\left \{u\in H^{1}(\Omega): u\in C^{2}(\mathcal{V})\cap C^{1}(\mathcal{E}), \left . u\right |_{K}\in \mathcal{P}_{p}(K), ~\forall K\in \mathcal{T}\right \},
$$
 coincides with the element $\mathcal{P}_{2, p}\Lambda^{0}(\mathcal{T}^{3})$  in \cite{christiansen2016nodal}, which is also identical to each component of the velocity element in the discrete 3D Stokes complex by Neilan \cite{neilan2015discrete}. The space $\mathcal{W}^{0}_{p}(\mathcal T)$ admits enhanced $C^{2}$ and $C^{1}$  continuities at vertices and edges, respectively, but only $C^{0}$ continuity across faces. Therefore no higher global continuity than $H^{1}$ is imposed.

The global dimension of $\mathcal{W}^{0}_{p}(\mathcal T)$ on the mesh $\mathcal T$ reads
$$
{\dim} \mathcal{W}^{0}_{p} (\mathcal T)=10V+\left [ 2(p-4)+(p-5)  \right ]E+{p-4 \choose 2}  F+{p-1\choose 3} T.
$$

\paragraph{Space $\mathcal{W}^{1}_{p}(\mathcal T)$.} The finite element space 
$$
\mathcal{W}^{1}_{p}(\mathcal T):=\left \{\bm u\in H(\curl;\Omega): \bm u\in C^{1}(\mathcal{V})\cap C^{0}(\mathcal{E}), \left . \bm u\right |_{K}\in \mathcal{P}_{p}(K), ~\forall K\in \mathcal{T}\right \},
$$
 consists of piecewise polynomials of degree less than or equal to $p$ for each component, and the degrees of freedom for $\bm{w}\in \mathcal{W}^{1}_{p}(\mathcal T)$ can be given by
\begin{itemize}
\item
function values and the first order derivatives of $\bm w$ at each vertex $\bm x\in \mathcal{V}$,
$$
\bm{w}(\bm x), \quad \partial_{i}\bm{w}(\bm x), ~i=1,2, 3;
$$
\item
moments of $\bm w$ on each edge $e\in \mathcal{E}$,
$$
\int_{e}\bm{w}\cdot \bm{q}\,\mathrm{d}\bm x, \quad \forall\bm{q}\in \left [\mathcal{P}_{p-4}(e)\right ]^{3};$$
\item
moments of $\curl\bm w$ on each edge $e\in \mathcal{E}$,
$$\quad\int_{e}(\curl\bm{w}\cdot \bm{\nu}_{e,1})\cdot q_{1}\,\mathrm{d}\bm x, \quad \int_{e}(\curl\bm{w}\cdot \bm{\nu}_{e,2})\cdot q_{2}\,\mathrm{d}\bm x,\quad\forall q_{1}, q_{2}\in \mathcal{P}_{p-3}(e),
$$
where $\bm{\nu}_{e,1}$ and $\bm{\nu}_{e,2}$ are two linearly independent  unit normal vectors of edge $e$; 
\item
integrals on each face $f\in \mathcal{F}$,
$$
\int_{f}\left (\bm{w}\times \bm{\nu}_{f}\right )\cdot\curl_{f}\left ( \psi_{f} \lambda_{f, 1}^{2}\lambda_{f, 2}^{2}\lambda_{f, 3}^{2}\right )\mathrm{d}\bm x, \quad \forall\psi_{f}\in \mathcal{P}_{p-5}(f),
$$ 
$$
\int_{f}(\curl \bm{w}\cdot \bm{\nu}_{f})\cdot g\,\mathrm{d}\bm x, \quad \forall g\in \mathcal{P}_{p-4}(f)/\mathbb{R},
$$
where 
$\lambda_{f,1}\lambda_{f,2}\lambda_{f,3}$ is a face bubble associate with face $f$, $\bm \nu_f$ is the unit normal vector of $f$, and
$$\mathcal{P}_{p-4}(f)/\mathbb{R}:=\left \{q\in \mathcal{P}_{p-4}(f): \int_{f}q\, \mathrm{d}\bm x=0\right \};$$ 
\item
interior degrees of freedom in each $K\in \mathcal T$,
\begin{align}\label{interior-dof-curl}
\int_{K}\bm{w}\cdot \bm{z}\,\mathrm{d}\bm x, \quad \forall\bm{z}\in \RT_{p-2}(K).
\end{align}
\end{itemize}

The dimension of $\mathcal W_p^1(\mathcal T)$ is
\begin{align*}
\dim \left ( \mathcal{W}^{1}_{p}(\mathcal T)\right)&=12V+[3(p-3)+2(p-2)]E+[1/2(p-2)(p-3)-1+1/2(p-3)(p-4)]F\\
&\quad\quad+\left(\frac{1}{2}p^{3}-p^{2}-\frac{1}{2}p+1\right)T\\
&=12V+(5p-13)E+(p^{2}-6p+8)F+\left(\frac{1}{2}p^{3}-p^{2}-\frac{1}{2}p+1\right)T.
\end{align*}

\begin{theorem}
The degrees of freedom for $\mathcal{W}^{1}_{p}(\mathcal T)$ are unisolvent and $\mathcal{W}^{1}_{p}$ is $H(\curl)$ conforming.
\end{theorem}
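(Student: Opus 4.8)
The plan is to establish unisolvence by the standard dimension-count-plus-vanishing argument, and then to read off $H(\curl)$-conformity from the fact that the degrees of freedom on a face $f$ and on its boundary together determine the tangential trace $\bm w\times\bm\nu_f$ on $f$. First I would verify that the number of degrees of freedom listed equals $\dim\mathcal{W}^1_p(\mathcal T)$ locally, i.e. on a single tetrahedron $K$: there are $12$ vertex conditions per vertex ($4$ vertices), $3(p-3)$ edge moments of $\bm w$ plus $2(p-2)$ edge moments of $\curl\bm w\cdot\bm\nu_{e,i}$ per edge, the two families of face conditions of cardinalities $\dim\mathcal{P}_{p-5}(f)=\binom{p-3}{2}$ and $\dim(\mathcal{P}_{p-4}(f)/\mathbb R)=\binom{p-2}{2}-1$, and finally $\dim\RT_{p-2}(K)$ interior conditions. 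Summing and comparing with $3\binom{p+3}{3}=\dim[\mathcal P_p(K)]^3$ reduces unisolvence, by the usual equivalence, to the implication: if $\bm w\in[\mathcal P_p(K)]^3$ annihilates all the listed functionals then $\bm w\equiv 0$.

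Next I would carry out the vanishing argument layer by layer, in the order vertices $\to$ edges $\to$ faces $\to$ interior. From the vanishing of $\bm w$ and $\partial_i\bm w$ at each vertex and of the edge moments $\int_e\bm w\cdot\bm q$ for $\bm q\in[\mathcal P_{p-4}(e)]^3$, one gets that $\bm w$ itself vanishes on $\partial K$ to first order along edges only partially; the point is rather to control the tangential trace on faces. The natural route is: on a face $f$, the tangential trace $\bm w\times\bm\nu_f$ is a polynomial $2$-vector field of degree $p$ whose edge traces and vertex values/derivatives are already pinned down by the lower-dimensional data, so it lies in a bubble-type subspace; the face functionals involving $\curl_f(\psi_f\lambda_{f,1}^2\lambda_{f,2}^2\lambda_{f,3}^2)$ and $(\curl\bm w\cdot\bm\nu_f)$ are designed exactly so that, paired against this subspace, they form a dual basis — this is the same mechanism as in the Stokes element of Neilan \cite{neilan2015discrete} and the $\mathcal P_{2,p}\Lambda$ family of \cite{christiansen2016nodal}, and I would invoke the 2D Clough-Tocher / Scott-Vogelius-type computation to conclude $\bm w\times\bm\nu_f=0$ on every face. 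Once the full tangential trace vanishes on $\partial K$, $\bm w\in[\mathcal P_p(K)]^3$ lies in the $H(\curl)$ bubble space, and a standard integration-by-parts argument against $\RT_{p-2}(K)$ — writing $\int_K\bm w\cdot\bm z$ for $\bm z\in\RT_{p-2}$ and using the de Rham structure to split $\bm z$ into a gradient part and a complementary part, together with $\curl\bm w$ being controlled by the interior and face $\curl$-moments — forces $\bm w\equiv 0$. This yields unisolvence, hence $\mathcal{W}^1_p(\mathcal T)$ is well defined with the stated dimension.

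For $H(\curl)$-conformity I would argue that the functionals attached to a face $f$ and to the vertices and edges of $f$ depend only on the restriction of $\bm w$ to $f$ (indeed only on $\bm w\times\bm\nu_f$ and on $\curl\bm w\cdot\bm\nu_f = \rot_f(\bm w\times\bm\nu_f)$, a quantity intrinsic to the face), so two elements from tetrahedra sharing $f$ that agree on all these shared degrees of freedom have the same tangential trace on $f$; since on a single element the face-plus-boundary data determine $\bm w\times\bm\nu_f$ uniquely (this is precisely the face part of the unisolvence proof), the tangential trace is single-valued across $f$, which is exactly $H(\curl)$-conformity. The main obstacle I anticipate is the face step of the vanishing argument: showing that the two families of face functionals are unisolvent on the relevant trace bubble space requires the nontrivial dimension identity for 2D $C^1$ Clough-Tocher bubbles together with a careful choice of the weight $\lambda_{f,1}^2\lambda_{f,2}^2\lambda_{f,3}^2$ ensuring $\curl_f$ of the weighted space lands in, and spans, the tangential-bubble complement — everything else (the vertex/edge layers and the final interior $\RT_{p-2}$ pairing) is routine bookkeeping once that is in place.
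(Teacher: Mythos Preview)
Your overall architecture matches the paper's: count degrees of freedom against $\dim[\mathcal P_p(K)]^3$, prove that vanishing of all functionals forces $\bm w=0$ layer by layer (vertices, edges, faces, interior), and read off $H(\curl)$-conformity from the fact that the face-and-boundary data determine $\bm w\times\bm\nu_f$. The interior step is exactly the standard second-kind N\'ed\'elec argument against $\RT_{p-2}(K)$; no de~Rham splitting of $\bm z$ or separate control of $\curl\bm w$ is needed once the tangential traces vanish.

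The gap is in the face step. Your invocation of Clough--Tocher/Scott--Vogelius machinery is misplaced: $\mathcal W^1_p(\mathcal T)$ lives on an \emph{unsplit} tetrahedral mesh (you appear to be conflating it with the Worsey--Farin element $\mathcal V^1_p$ of Section~\ref{sec:curl}), so each face is a plain triangle and no macroelement bubble identity is available or needed. The actual mechanism, which the paper carries out explicitly, is a stream-function reduction, and the order in which the two face families are used is essential. One first combines the moments $\int_f(\curl\bm w\cdot\bm\nu_f)\,g$ for $g\in\mathcal P_{p-4}(f)/\mathbb R$ with the edge moments $\int_e(\curl\bm w\cdot\bm\nu_{e,i})\,q$ and the vertex derivatives to conclude that $\div_f\bm w_f=\curl\bm w\cdot\bm\nu_f$ vanishes on all of $f$. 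Only then can one write $\bm w_f=\curl_f\phi$ for a scalar $\phi\in\mathcal P_{p+1}(f)$; the remaining vertex and edge data force both $\phi$ and $\nabla_f\phi$ to vanish on $\partial f$, whence $\phi=\psi_f\,\lambda_{f,1}^2\lambda_{f,2}^2\lambda_{f,3}^2$ with $\psi_f\in\mathcal P_{p-5}(f)$, and the other face family yields $\int_f|\curl_f\phi|^2=0$. A direct ``dual basis'' pairing without first killing $\div_f\bm w_f$ does not close, because the family $\curl_f(\psi_f\lambda_{f,1}^2\lambda_{f,2}^2\lambda_{f,3}^2)$ spans only the divergence-free part of the tangential bubble space.
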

\begin{proof}
On a tetrahedron, the number of the above degree of freedom is 
$$
1/2\left ( p^{3}+6p^{2}+11p+6\right ).
$$
 This coincides with the dimension of $\left [\mathcal{P}_{p}\left (K\right )\right ]^{3}$. Then it suffices to show that if all the degrees of freedom vanish on a function $\bm{w}\in \left [\mathcal{P}_{p}\left (K\right )\right ]^{3}$, then $\bm{w}=0$ on $K\in\mathcal T$. 
 
Define ${\bm{w}_{f}}:=\bm{w}\times \bm{\nu}_{f}$ as a 2D vector field on $f$. The first step is to  show that $\bm{w}_{f}=0$  on each face $f$ of $K$. Actually, from the degrees of freedom, we conclude that
\begin{enumerate}
\item
for each vertex $\bm x\subset f$, it holds that
$
\bm{w}_{f}(\bm x)=\partial_{\bm{\tau}_{f, i}}\bm{w}_{f}(\bm x)=0, ~i=1,2,
$
where $\bm{\tau}_{f, i}$ is the $i$-th tangent vector on $f$;
\item
for each edge $e\subset f$, it holds that
$
\int_{e}\bm{w}_{f}\cdot \bm{q}\,\mathrm{d}\bm x= \int_{e}\div_{f}\bm{w}_{f}\cdot q_{0}\,\mathrm{d}\bm x=0,\ \forall \bm{q}\in \left [\mathcal{P}_{p-4}(e)\right ]^{2}\text{ and } q_{0}\in \mathcal{P}_{p-3}(e);
$
\item 
in the interior of $f$, it holds that
$
\int_{f}\curl_{f}\left ( \psi_{f} \lambda_{f, 1}^{2}\lambda_{f, 2}^{2}\lambda_{f, 3}^{2}\right )\cdot\bm{w}_{f}\,\mathrm{d}\bm x=0,\ \forall \psi_{f}\in \mathcal{P}_{p-5}(f)
$
and
$
\int_{f}\div_{f}\bm{w}_{f}\cdot g\,\mathrm{d}\bm x, 
\ \forall g\in \mathcal{P}_{p-4}(f)/\mathbb{R}.
$
\end{enumerate}
The above functionals in (1) -- (3) are the degrees of freedom of the shape function space $\left [ \mathcal{P}_{p}(f)\right ]^{2}$, see \cite{Arnold2006a}.

To show $\bm{w}_{f}=0$, we first have $\div_{f}\bm{w}_{f}\in \mathcal{P}_{p-1}(f)$ and the vanishing degrees of freedom imply that $\div_{f}\bm{w}_{f}=0$ on $f$. Then it follows that $\bm{w}_{f}=\curl_{f}\phi$ for some $\phi\in \mathcal{P}_{p+1}(f)$. Without loss of generality, we assume that $\phi$ vanishes at one of the vertices.  Now it follows from (1) -- (3) that the following degrees of freedom vanish on $\phi$,
\begin{enumerate}
\item
for each vertex $x\in f$, it holds that
$
\partial_{i}\phi(\bm x)=\partial_{i}\partial_{j}\phi(\bm x)=0 \text{ for }i, j=1, 2
$;
\item
for each edge $e\in \partial f$, it holds that
$
\int_{e}(\curl_{f}\phi)\cdot q\mathrm{d} \bm x, \ \forall q\in [\mathcal{P}_{p-4}(e)]^2$;
\item
in the interior of $f$, it holds that
\begin{align}
\label{face-dof}
	\int_{f}\curl_{f}\phi \cdot  \curl_{f}\left ( \psi_{f} \lambda_{f, 1}^{2}\lambda_{f, 2}^{2}\lambda_{f, 3}^{2}\right )\mathrm{d} \bm x=0, \ \forall \psi_{f}\in \mathcal{P}_{p-5}(f).
\end{align}
\end{enumerate}
This shows that $\curl_{f}\phi=0$ on all the edges of $f$, which further implies that the tangential derivative of $\phi$ along each edge vanishes.  Since $\phi$ vanishes at one vertex of $f$, both $\phi$ and $\curl_{f}\phi$ vanish on $\partial f$. Consequently,
$\phi$ is of the form $\phi=\psi_f\lambda_{f,1}^{2}\lambda_{f,2}^{2}\lambda_{f,3}^{2}$ for some polynomial $\psi_f\in \mathcal{P}_{p-5}(f)$, which together with \eqref{face-dof} yields $\psi_f=0$. This implies  $\phi=0$, and further  $\bm{w}_{f}=\curl_{f} \phi=0$.

Then by the interior degrees of freedom \eqref{interior-dof-curl}, it follows from a similar argument to \cite{Arnold.D;Falk.R;Winther.R.2006a} that $\bm{w}=0$.
The proof for the unisolvence also shows that $\mathcal{W}^{1}_{p}(\mathcal T)$ is $H(\curl)$ conforming. 
\end{proof}

\paragraph{Space $\mathcal{W}^{2}_{p}(\mathcal T)$.} The $H(\div)$ element $\mathcal{W}^{2}_{p}(\mathcal T)$ was defined in \cite{christiansen2016nodal}:
$$
\mathcal{W}^{2}_{p}(\mathcal T):=\left \{\bm{u}\in H(\div): \left . \bm u\right |_{K}\in \mathcal{P}_p (K), \forall K\in \mathcal{T}, \bm{u}\in C^{0}(\mathcal{V}), \bm{u}\cdot\bm{\nu}_{e,i}\in C^{0}(\mathcal{E}), i=1, 2\right \},
$$
where $\bm{\nu}_{e,i},~i=1, 2$ are two linearly independent unit normal vectors of an edge $e$. 

For $u\in \mathcal W_p^2(\mathcal T)$, the degrees of freedom can be given by \begin{itemize}
\item
values of $\bm u$ at each vertex ${\bm x}\in \mathcal{V}$,
 $$
 \quad\bm{u}({\bm x});
 $$
\item
moments of two normal components of $\bm u$ on each edge $e\in \mathcal{E}$,
$$
\int_{e}(\bm{u}\cdot \bm{\nu}_{e, i})w \mathrm{d} \bm x, \quad \forall w\in \mathcal{P}_{p-2}(e), \ i=1, 2;
$$
\item
moments of the normal component of $\bm u$ on each face $f\in \mathcal F$,
$$
\int_{f}\left (\bm{u}\cdot \bm{\nu}_{f}\right ) {w}\mathrm{d} \bm x, \quad \forall {w}\in  \mathcal{P}_{p-3}(f);
$$
\item
integrals over each tetrahedron $K\in \mathcal{T}$,
$$
\int_{K}\bm{u}\cdot \bm{v}\mathrm{d} \bm x,  \quad\forall \bm{v}\in \NED_{p-1}^1(K).
$$
\end{itemize}
The dimension of $\mathcal{W}^{2}_{p}(\mathcal T)$ is
$$
\dim \mathcal{W}^{2}_{p}(\mathcal T)=3V+2(p-1)E+1/2(p-1)(p-2)F+\left(\frac{1}{2}p^{3}-p^{2}+\frac{1}{2}p-1\right)T.
$$

\begin{theorem}
The sequence \eqref{complex-HZ} is a complex which is exact on contractible domains. 
\end{theorem}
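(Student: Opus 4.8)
The plan is to prove the two assertions in \eqref{complex-HZ} separately: that it is a cochain complex, and that it is exact on contractible domains. For the complex property the compositions $\curl\grad$ and $\div\curl$ vanish pointwise, so it remains to check the three inclusions. The first and last are routine bookkeeping: if $u\in\mathcal{W}^0_{p+3}(\mathcal{T})$ then $\grad u$ is piecewise in $[\mathcal{P}_{p+2}]^3$, lies in $H(\curl;\Omega)$, and carries one order less of vertex/edge super-smoothness than $u$, so $\grad u\in\mathcal{W}^1_{p+2}(\mathcal{T})$; and the divergence of a piecewise $[\mathcal{P}_{p+1}]^3$ field in $H(\div;\Omega)$ is automatically piecewise in $\mathcal{P}_p$, hence in $\mathcal{W}^3_p(\mathcal{T})$. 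The inclusion $\curl\mathcal{W}^1_{p+2}(\mathcal{T})\subseteq\mathcal{W}^2_{p+1}(\mathcal{T})$ is the only one needing care: for $\bm w\in\mathcal{W}^1_{p+2}(\mathcal{T})$ the field $\curl\bm w$ is in $H(\div)$ and piecewise in $[\mathcal{P}_{p+1}]^3$, and one must verify that it is single-valued at vertices and that each normal component $\curl\bm w\cdot\bm\nu_{e,i}$ is single-valued on every edge $e$. The vertex statement holds because $\curl\bm w(\bm x)$ is a fixed linear combination of the first derivatives $\partial_j\bm w(\bm x)$, which are shared degrees of freedom; the edge statement holds because on $e$ the polynomial $\curl\bm w\cdot\bm\nu_{e,i}$ has degree $p+1$ and is determined by its two endpoint values (again from the $C^1$ vertex data) together with the moments $\int_e(\curl\bm w\cdot\bm\nu_{e,i})\,q_i$, $q_i\in\mathcal{P}_{p-1}(e)$, which are precisely the $\mathcal{W}^1_{p+2}$ edge degrees of freedom; these $2+p$ functionals are unisolvent on $\mathcal{P}_{p+1}(e)$. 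This is exactly why the curl-type edge degrees of freedom were built into $\mathcal{W}^1$.

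For exactness I argue spot by spot. Exactness at $\mathbb{R}$ and at $\mathcal{W}^0_{p+3}(\mathcal{T})$ is clear since $\ker(\grad,\mathcal{W}^0_{p+3})$ consists of the constants. For exactness at $\mathcal{W}^1_{p+2}(\mathcal{T})$ I use that $\Omega$ is contractible, so $\ker(\curl,H(\curl;\Omega))=\grad H^1(\Omega)$: given $\bm w\in\mathcal{W}^1_{p+2}(\mathcal{T})$ with $\curl\bm w=0$, write $\bm w=\grad u$ with $u\in H^1(\Omega)$ unique up to a constant. Since $\bm w$ is piecewise in $[\mathcal{P}_{p+2}]^3$, $u$ is piecewise in $\mathcal{P}_{p+3}$, and being in $H^1$ it lies in the degree-$(p+3)$ Lagrange space. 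The membership $\bm w\in C^1(\mathcal{V})\cap C^0(\mathcal{E})$ transfers to $u$: the first and second derivatives of $u$ are single-valued at vertices and the first derivatives are single-valued on edges, so $u\in C^2(\mathcal{V})\cap C^1(\mathcal{E})$, i.e.\ $u\in\mathcal{W}^0_{p+3}(\mathcal{T})$. Hence $\ker(\curl,\mathcal{W}^1_{p+2})=\grad\mathcal{W}^0_{p+3}$, which is the exactness at $\mathcal{W}^1_{p+2}(\mathcal{T})$.

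Next I establish exactness at $\mathcal{W}^3_p(\mathcal{T})$, i.e.\ surjectivity of $\div$. On each tetrahedron $K$, any $\bm u\in[\mathcal{P}_{p+1}(K)]^3$ with vanishing normal trace on $\partial K$ has all vertex, edge, and face degrees of freedom of $\mathcal{W}^2_{p+1}$ equal to zero, so such functions belong to the interior bubble space of $\mathcal{W}^2_{p+1}(K)$; by the local exactness of the polynomial de~Rham sequence with vanishing traces, their divergences exhaust the mean-zero polynomials in $\mathcal{P}_p(K)$. The remaining one-dimensional-per-cell freedom — the cell averages — is matched by the face-flux basis functions associated with the degrees of freedom $\int_f(\bm u\cdot\bm\nu_f)\,w$ with $w$ constant, using the connectedness of the mesh; this is the classical argument that $\div$ is onto for $H(\div)$ elements paired with fully discontinuous pressures. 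Finally, \eqref{complex-HZ} is a complex that is already exact at $\mathbb{R}$, $\mathcal{W}^0_{p+3}$, $\mathcal{W}^1_{p+2}$, and $\mathcal{W}^3_p$, and the dimension identity $\dim\mathbb{R}-\dim\mathcal{W}^0_{p+3}+\dim\mathcal{W}^1_{p+2}-\dim\mathcal{W}^2_{p+1}+\dim\mathcal{W}^3_p=0$ (in the sense of \eqref{dimcount}) holds, as one checks directly from the dimension formulas recorded above together with Euler's relation $V-E+F-T=1$ for contractible domains. By the standard Euler-characteristic argument this forces exactness at the remaining space $\mathcal{W}^2_{p+1}(\mathcal{T})$ as well.

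The step I expect to be the main obstacle is the inclusion $\curl\mathcal{W}^1_{p+2}(\mathcal{T})\subseteq\mathcal{W}^2_{p+1}(\mathcal{T})$ together with the dual regularity transfer in the potential-recovery step: one must see that the precise vertex and edge degrees of freedom of $\mathcal{W}^1$ (especially the $\curl\bm w\cdot\bm\nu_{e,i}$ moments) are calibrated so that $\curl\bm w$ inherits exactly the normal-trace continuity demanded by $\mathcal{W}^2$, and, going the other way, that a curl-free $\bm w\in\mathcal{W}^1_{p+2}$ integrates to a potential whose vertex/edge smoothness is exactly that of $\mathcal{W}^0_{p+3}$. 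Once the degrees of freedom are lined up in this way, the remainder is a routine combination of the de~Rham theorem on contractible domains, local polynomial exactness, and the dimension count, with the unisolvence and conformity of $\mathcal{W}^1$ available from the preceding theorem and those of $\mathcal{W}^0$ and $\mathcal{W}^2$ from the cited constructions.
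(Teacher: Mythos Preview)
Your proof is correct and follows the same overall strategy as the paper: verify the nontrivial inclusion $\curl\mathcal{W}^{1}_{p+2}\subset\mathcal{W}^{2}_{p+1}$, establish exactness at $\mathcal{W}^{1}_{p+2}$ by recovering a potential with the right supersmoothness, establish surjectivity of $\div$, and finish with the dimension count \eqref{dimcount}.

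There are two minor differences worth noting. First, for the inclusion $\curl\mathcal{W}^{1}_{p+2}\subset\mathcal{W}^{2}_{p+1}$ the paper argues via the face identity $\curl\bm w\cdot\bm\nu_f=\div_f(\bm w\times\bm\nu_f)$, which directly yields $H(\div)$-conformity from $H(\curl)$-conformity; you instead invoke $\curl:H(\curl)\to H(\div)$ abstractly and then separately verify the extra $C^{0}(\mathcal V)$ and edge-normal $C^{0}(\mathcal E)$ conditions defining $\mathcal{W}^{2}$ using the vertex derivatives and the edge moments $\int_e(\curl\bm w\cdot\bm\nu_{e,i})q$ built into $\mathcal{W}^{1}$. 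Your treatment here is actually more explicit than the paper's, which states only the face identity and leaves the vertex/edge supersmoothness of $\curl\bm w$ implicit in the phrase ``the degrees of freedom of $\mathcal{W}^{1}_{p+2}$ imply the continuity of the normal component of $\curl\bm w$.'' Second, for the surjectivity of $\div$ the paper simply cites \cite{christiansen2016nodal}, whereas you reproduce the standard bubble-plus-flux argument; both are fine.
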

\begin{proof}
To show that \eqref{complex-HZ} is a complex, we only show that $\curl\mathcal{W}_{p+1}^{1}(\mathcal T)\subset \mathcal{W}_{p}^{2}(\mathcal T)$.
 This is a consequence of the identity
$$
\curl\bm{w}\cdot \bm{\nu}_f=\div_{f}(\bm{w}\times \bm{\nu}_f) \text{ for }\bm w\in \mathcal{W}_{p+1}^{1}(\mathcal T),
$$
where $\bm{\nu}_f$ is the unit normal vector of $f$. Therefore the degrees of freedom of $\mathcal{W}_{p+2}^{1}(\mathcal T)$ imply the continuity of the normal component of $\curl\bm{w}$, meaning that $\curl\mathcal{W}_{p+1}^{1}(\mathcal T)\subset \mathcal{W}_{p}^{2}(\mathcal T)$.

As concluded in \cite{christiansen2016nodal}$, \div: \mathcal{W}_{p+1}^{2}(\mathcal T)\mapsto \mathcal{W}_{p}^{3}(\mathcal T)$ is onto. Moreover,  since $\mathcal{W}_{p+2}^{1}(\mathcal T)$ is of $C^{1}$ continuity at vertices and of $C^{0}$ continuity on edges, the preimage of $\grad$ has to be $C^{2}$ at vertices and $C^{1}$ on edges.

Finally, a dimension count in the sense of \eqref{dimcount} implies the exactness.
\end{proof}

\begin{figure}[htbp]
\begin{center}
\includegraphics[width=0.68\linewidth]{./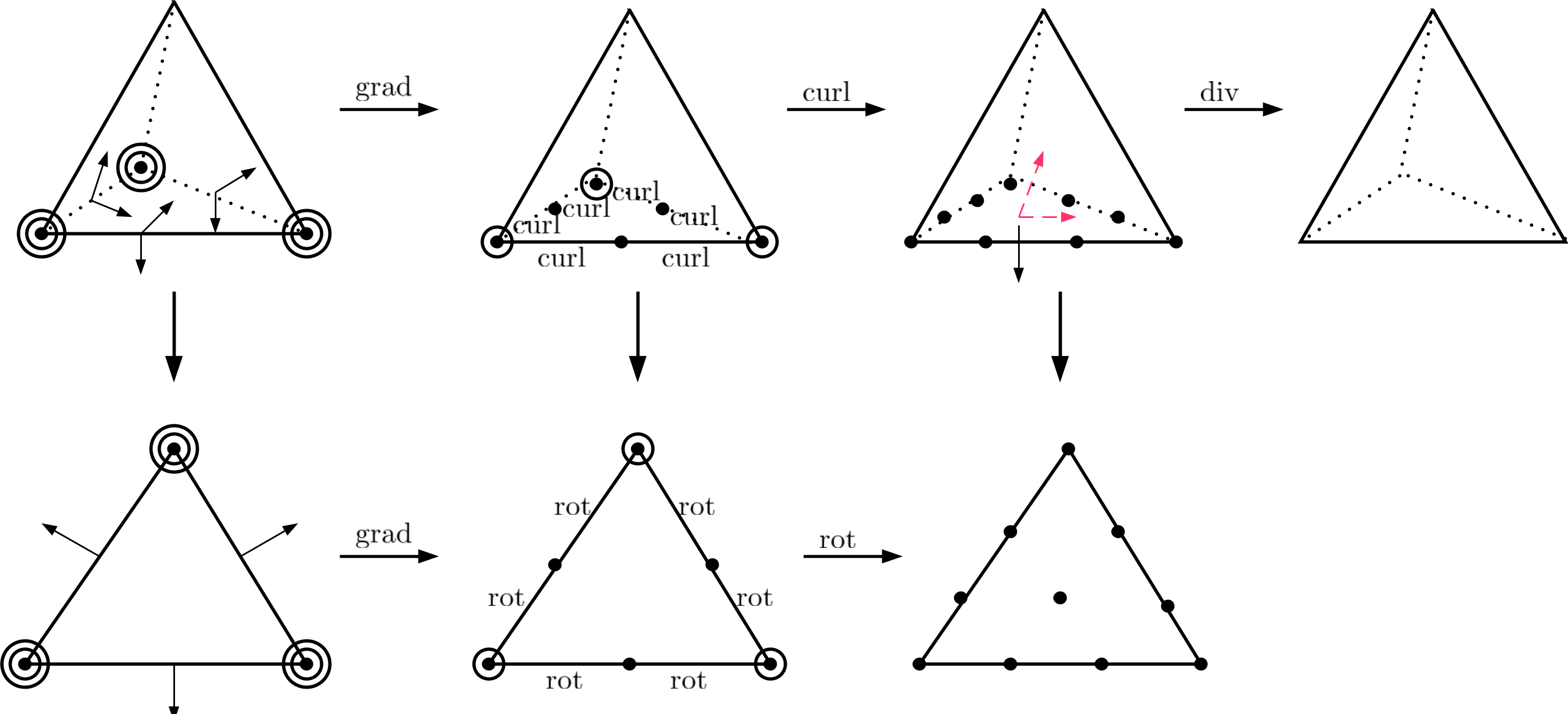}
\caption{The complex containing the 3D $H(\div)$ nodal element and its 2D restriction. For $H(\rot)$/$H(\curl)$/$H(\div)$ elements, a dot $\bullet$ represents the evaluation of the vector valued function, which counts as two/three degrees of freedom. In the 3D $H(\div)$ element, the two red dashed lines indicate the two tangential components associated with the Lagrange point on the face. They are regarded as interior degrees of freedom, which do not contribute to inter-element continuity.}
\label{fig:div}
\end{center}
\end{figure}

\begin{remark}
The rotation of the restriction of \eqref{complex-HZ} to each face (the second row in Figure \ref{fig:div}) was discussed in \cite{Arnold2006a} to interpret the Arnold-Winther elasticity element with the Bernstein-Gelfand-Gelfand (BGG) construction \cite{Arnold.D;Falk.R;Winther.R.2006a,arnold2021complexes}.
\end{remark}

\section{Partially orthogonal nodal bases}

To obtain practical bases for high order finite elements, one should take several issues into consideration, including the condition number, sparsity, symmetry and fast assembling. In particular, certain orthogonality is usually incorporated in the bases. Bases for high order Lagrange elements are relatively mature, see, e.g., \cite{karniadakis2013spectral}. In this section, we construct bases for partially discontinuous nodal $H(\curl)$ and $H(\div)$ elements in both 2D and 3D based on explicit formulas of multi-variate orthogonal polynomials on simplices. 
 We aim at orthogonality in the $L^{2}$ sense. The reason to use the $L^{2}$ orthogonality has been addressed in \cite{hu2017well}, see also \cite{ainsworth2021preconditioning,xin2012well}. 

 We choose proper Jacobi polynomials such that the bases associated to each simplex are mutually orthogonal on the reference element. 
As in the case of the $C^{0}$ elements, the $L^{2}$ condition numbers of the global mass and stiffness matrices still grow with the polynomial degree if no proper preconditioner is adopted .

To fix the notation, we use $J_{n}^{\alpha,\beta}(x)$ to denote the Jacobi polynomial on $[-1, 1]$ with weight $\omega^{\alpha,\beta}:=(1-x)^{\alpha}(1+x)^{\beta}$, i.e.
$$
\int_{-1}^{1}J_{n}^{\alpha, \beta}(x)J_{n'}^{\alpha,\beta}(x)(1-x)^{\alpha}(1+x)^{\beta}\, \mathrm{d}\bm x=\gamma_{n}^{\alpha,\beta}\delta_{nn'},
$$
where $\gamma_{n}^{\alpha,\beta}=\int_{-1}^{1} J_{n}^{\alpha,\beta}(x)^{2}(1-x)^{\alpha}(1+x)^{\beta}\, \mathrm{d}\bm x$. By a simple transform $\xi=(x+1)/2$, we obtain Jacobi polynomials on the reference interval $[0,1]$:
$$
\int_{0}^{1}J_{n}^{\alpha, \beta}(2\xi -1)J_{n'}^{\alpha,\beta}(2\xi -1)(1-\xi)^{\alpha}\xi^{\beta}\, \mathrm{d}\xi=c_{n}^{\alpha,\beta}\delta_{nn'},
$$
where 
$$
c_{n}^{\alpha,\beta}=2^{-\alpha-\beta-1}\gamma_{n}^{\alpha,\beta}.
$$

We cite the explicit form of Jacobi polynomials on the reference simplex of dimension $d$ (c.f. \cite{dunkl2014orthogonal}, Section 5.3):
\begin{theorem}
The following polynomials form a group of mutually orthonormal bases on the $d$-dimensional reference simplex with weight $\lambda_{0}^{\alpha_{0}}\dots \lambda_{d}^{\alpha_{d}}$:
\begin{align}\label{nd-orthogonal}
J_{\bm{n}}^{\bm{\alpha}}(\lambda_{0}, \dots, \lambda_{d-1}):=(c_{\bm{n}}^{\bm{\alpha}})^{-1}\prod_{j=0}^{d-1}\left ( 1-\sum_{i=0}^{j-1}\lambda_{i} \right )^{n_{j}}J_{n_{j}}^{a_{j}, b_{j}}\left(  \frac{2\lambda_{j}}{1-\sum_{i=0}^{j-1}\lambda_{i}} -1 \right ),
\end{align}
where $\bm{n}=(n_{0}, \dots, n_{d-1})$ and $\bm{\alpha}=(\alpha_{0}, \alpha_{1}, \dots, \alpha_{d})$ are multi-indexes of the polynomial degree and the weights, respectively,  $a_{j}=2\sum_{i=j+1}^{d-1}n_{i}+\sum_{i=j+1}^{d}\alpha_{i}+d-j-1$ and $b_{j}=\alpha_{j}$.  Here $c_{\bm{n}}^{\bm{\alpha}}$ is a scaling factor \cite{xin2011well}. When $J_{n_{j}}^{a_{j}, b_{j}}(\cdot)$ is scaled to be orthonormal, we have
$$
\left ( c_{\bm{n}}^{\bm{\alpha}}\right )^{-2}=\Pi_{i=0}^{d-1} 2^{a_{i-1}}.
$$
\end{theorem}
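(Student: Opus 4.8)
The plan is to prove the claim by reducing the orthogonality integral over the $d$-dimensional reference simplex $\Delta^{d}$ to a product of one-dimensional Jacobi orthogonality relations through the collapsed (Duffy-type) coordinates
\[
t_{j}:=\frac{\lambda_{j}}{1-\sum_{i=0}^{j-1}\lambda_{i}},\qquad \eta_{j}:=2t_{j}-1\in[-1,1],\qquad j=0,\dots,d-1,
\]
which map $\Delta^{d}$ bijectively onto the cube $[0,1]^{d}$. First I would record the inverse relations $1-\sum_{i=0}^{j-1}\lambda_{i}=\prod_{k=0}^{j-1}(1-t_{k})$, hence $\lambda_{j}=t_{j}\prod_{k=0}^{j-1}(1-t_{k})$ for $j\le d-1$ and $\lambda_{d}=\prod_{k=0}^{d-1}(1-t_{k})$. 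Since $\lambda_{j}$ depends only on $t_{0},\dots,t_{j}$, the Jacobian matrix $\partial(\lambda_{0},\dots,\lambda_{d-1})/\partial(t_{0},\dots,t_{d-1})$ is lower triangular, so its determinant equals $\prod_{j=0}^{d-1}\prod_{k=0}^{j-1}(1-t_{k})=\prod_{k=0}^{d-1}(1-t_{k})^{\,d-1-k}$.

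Next I would substitute into the bilinear form $\int_{\Delta^{d}}J_{\bm{n}}^{\bm{\alpha}}J_{\bm{n}'}^{\bm{\alpha}}\,\lambda_{0}^{\alpha_{0}}\cdots\lambda_{d}^{\alpha_{d}}$ and collect, for each $k$, the total exponent of $(1-t_{k})$: the weight $\lambda_{0}^{\alpha_{0}}\cdots\lambda_{d}^{\alpha_{d}}$ contributes $\sum_{j=k+1}^{d}\alpha_{j}$, the Jacobian contributes $d-1-k$, and the prefactors $\prod_{j}(1-\sum_{i=0}^{j-1}\lambda_{i})^{n_{j}}$ of the two polynomials contribute $\sum_{j=k+1}^{d-1}(n_{j}+n_{j}')$, while the exponent of the bare factor $t_{k}$ is simply $\alpha_{k}$. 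Thus the transformed integral is an iterated integral over the cube whose $t_{k}$-slice involves $J_{n_{k}}^{a_{k},b_{k}}(\eta_{k})\,J_{n_{k}'}^{a_{k},b_{k}}(\eta_{k})$ integrated against a Jacobi weight in $\eta_{k}$ with $b_{k}=\alpha_{k}$ on the $t_{k}$ side and exponent $d-1-k+\sum_{j>k}\alpha_{j}+\sum_{j>k}(n_{j}+n_{j}')$ on the $(1-t_{k})$ side.

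I would then integrate in the order $t_{d-1},t_{d-2},\dots,t_{0}$. For $k=d-1$ the $(1-t_{d-1})$-exponent is exactly $\alpha_{d}=a_{d-1}$, so the innermost integral is the one-dimensional Jacobi orthogonality relation and yields $c_{n_{d-1}}^{a_{d-1},b_{d-1}}\,\delta_{n_{d-1},n_{d-1}'}$, up to explicit powers of $2$ coming from $\eta=2t-1$ (absorbed into $c_{\bm{n}}^{\bm{\alpha}}$). The Kronecker delta forces $n_{d-1}=n_{d-1}'$, which is precisely what makes the $(1-t_{d-2})$-exponent collapse to $1+\alpha_{d-1}+\alpha_{d}+2n_{d-1}=a_{d-2}$; inducting downwards, once $n_{i}=n_{i}'$ has been established for all $i>k$ the $t_{k}$-integral is exactly a one-dimensional Jacobi orthogonality relation with parameters $a_{k}=2\sum_{i=k+1}^{d-1}n_{i}+\sum_{i=k+1}^{d}\alpha_{i}+d-k-1$ and $b_{k}=\alpha_{k}$, matching the statement, and it contributes $\delta_{n_{k},n_{k}'}$ together with its one-dimensional normalization constant. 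Multiplying the $d$ normalization constants and the accumulated powers of $2$ produces $(c_{\bm{n}}^{\bm{\alpha}})^{2}$, so the prefactor $(c_{\bm{n}}^{\bm{\alpha}})^{-1}$ in the definition makes the family orthonormal. An essentially equivalent route is induction on $d$: view $\Delta^{d}$ as a cone over $\Delta^{d-1}$ in the $\lambda_{0}$-direction, apply Fubini and the inductive hypothesis on the base, and handle the cone direction with a single one-dimensional Jacobi computation.

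The main obstacle is the exponent bookkeeping: one must keep careful track of how many times each factor $(1-t_{k})$ is produced by the Jacobian, by the simplex weight, and by the polynomial prefactors, and—crucially—must fix the order of integration ($t_{d-1}$ first) because before the Kronecker deltas collapse $n_{i}$ onto $n_{i}'$ the $(1-t_{k})$-exponents still depend on the primed indices; only the downward induction makes the parameters $a_{k}$, $b_{k}$ emerge exactly as stated. The remaining step—checking that the powers of $2$ and the one-dimensional constants $\gamma_{n}^{a,b}$ assemble into the stated scaling factor $c_{\bm{n}}^{\bm{\alpha}}$—is routine.
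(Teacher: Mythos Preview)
Your argument is correct and is, in fact, the standard proof of this classical result: the Duffy (collapsed-coordinate) change of variables, the lower-triangular Jacobian, the exponent bookkeeping for each factor $(1-t_{k})$, and the crucial observation that integrating from $t_{d-1}$ downward lets the successive Kronecker deltas force $n_{i}=n_{i}'$ so that the $(1-t_{k})$-exponent becomes exactly $a_{k}$ at the $k$-th step. There is no gap.

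However, note that the paper does \emph{not} give its own proof of this theorem. The statement is quoted verbatim from the literature (Dunkl--Xu, \emph{Orthogonal Polynomials of Several Variables}, Section~5.3), introduced by ``We cite the explicit form of Jacobi polynomials on the reference simplex~\ldots''. So there is nothing to compare your proof against in the paper itself; what you have written is essentially the argument one finds in the cited reference.
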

In 2D we obtain a group of mutually orthogonal bases as follows
\begin{align}\label{2d-orthogonal}
J_{\bm{n}}^{\bm{\alpha}}=(c_{\bm{n}}^{\bm{\alpha}})^{-1}J_{n_{0}}^{2n_{1}+\alpha_{1}+\alpha_{2}+1, \alpha_{0}}(2\lambda_{0}-1)J_{n_{1}}^{\alpha_{2}, \alpha_{1}}\left ( \frac{2\lambda_{1}}{1-\lambda_{0}}-1  \right )(1-\lambda_{0})^{n_{1}}.
\end{align}

From  \eqref{2d-orthogonal}, we can obtain two groups of univariate polynomials which are orthogonal on a triangle. Let $n_{1}=0$, we obtain the first group:
$$
cJ_{n_{0}}^{\alpha_{1}+\alpha_{2}+1, \alpha_{0}}(2\lambda_{0}-1),
$$
where $\lambda_{0}$ is considered as the variable. For the second, we let $n_{0}=0$ and obtain
$$
cJ_{n_{1}}^{\alpha_{2}, \alpha_{1}}\left ( \frac{2\lambda_{1}}{1-\lambda_{0}}-1  \right )(1-\lambda_{0})^{n_{1}},
$$
where $\lambda_{0}$ and $\lambda_{1}$ are considered as variables.
Note that $\lambda_{0}+\lambda_{1}+\lambda_{2}=1$ on a triangle, we get yet another formula with variables $\lambda_{1}$ and $\lambda_{2}$:
\begin{align}\label{scaled-Legendre}
cJ_{n_{1}}^{\alpha_{2}, \alpha_{1}}\left ( \frac{\lambda_{1}-\lambda_{2}}{\lambda_{1}+\lambda_{2}} \right )(\lambda_{1}+\lambda_{2})^{n_{1}}.
\end{align}
When $\alpha_{1}=\alpha_{2}=0$, \eqref{scaled-Legendre} is the scaled Legendre polynomials used in \cite{zaglmayr2006high}. Due to the symmetry of variables, we will use \eqref{scaled-Legendre} to build up edge based bases for triangular elements.

The 3D version of \eqref{nd-orthogonal} reads
\begin{align} \nonumber  \label{3d-orthogonal1}
J_{\bm{n}}^{\bm{\alpha}}=(c_{\bm{n}}^{\bm{\alpha}})^{-1}&J_{n_{0}}^{2(n_{1}+n_{2})+\alpha_{1}+\alpha_{2}+\alpha_{3}+2, \alpha_{0}}(2\lambda_{0}-1)
J_{n_{1}}^{2n_{2}+\alpha_{2}+\alpha_{3}+1, \alpha_{1}}\left ( \frac{2\lambda_{1}}{1-\lambda_{0}} -1 \right )
\\&
\cdot(1-\lambda_{0})^{n_{1}}J_{n_{2}}^{\alpha_{3}, \alpha_{2}}\left (\frac{2\lambda_{2}}{1-\lambda_{0}-\lambda_{1}} -1\right )(1-\lambda_{0}-\lambda_{1})^{n_{2}}.
\end{align}
We can similarly consider its restriction to an edge or a face, which will be used in the construction of edge and face bases for the tetrahedral elements. 

\subsection{Triangular $H(\rot)$ element}

\begin{enumerate}
\item
Vertex-based basis functions at $\bm{x}\in \mathcal{V}$:
$$
\bm{v}_{\bm{x}, i}=\lambda_{\bm x}\bm{e}_{i}, \quad i=1, 2,
$$
where $\lambda_{\bm x}$ is the barycentric coordinates at $\bm{x}$ and $\bm e_i$ is a 2D vector with the ith component 1 and the others 0.
\item
Edge-based tangential bases on $e\in \mathcal E$: 
\begin{align}\label{22index}
\bm{v}_{e, j}^{\tau}= \phi_{j}\bm{\tau}_{e},\quad j=0, 1, 2, \cdots,\ \text{with }
 \phi_{j}=J_{j}^{2, 2}\left ( \frac{\lambda_{1}-\lambda_{2}}{\lambda_{1}+\lambda_{2}} \right )(\lambda_{1}+\lambda_{2})^{j}\lambda_{1}\lambda_{2},\end{align}
where $\bm{\tau}_{e}$ is the tangential vector of the edge $e$ and $\lambda_{1},\ \lambda_{2}$ are the two barycentric coordinates of the two vertices of $e$.
\item
Edge-based normal bases on $e\in \mathcal E$: 
$$
\bm{v}_{e,i, j}^{\nu}= \phi_{j}|_{f_{i}}\bm{\nu}_{e}, \quad i=1,2,\quad j=0, 1, 2, \cdots
$$
where $f_{1}$ and $f_{2}$ are the two elements sharing the edge $e$ and $\bm{\nu}_{e}$ is the unit normal vector of $e$.
\item
Interior bases in $f\in \mathcal T$ (here, $\mathcal T$ is a triangular mesh of a 2D domain): 
$$
\bm{v}_{f,  i}^{n_{0}, n_{1}}=\phi_{n_{0}, n_{1}}\bm{e}_{i}, \quad i=1,2,
$$
where
$$
\phi_{n_{0}, n_{1}}=(c_{\bm{n}}^{\bm{\alpha}})^{-1}J_{n_{0}}^{2n_{1}+5, 2}(2\lambda_{0}-1)J_{n_{1}}^{2, 2}\left ( \frac{2\lambda_{1}}{1-\lambda_{0}}-1  \right )(1-\lambda_{0})^{n_{1}}\lambda_{0}\lambda_{1}\lambda_{2},
$$
and $\lambda_{0}, \lambda_{1}, \lambda_{2}$ are the three barycentric coordinates of the three vertices of $f$.
\end{enumerate}

\begin{remark}
The $\lambda_{1}\lambda_{2}$ factor appearing in \eqref{22index} is such that the bases decay to zero on the other two edges of the triangle. Then the $(2, 2)$ index in the Jacobi polynomials is such that the bases are $L^{2}$ mutually orthogonal. A similar construction will also be used below for tetrahedral elements. 
\end{remark}

\subsection{Tetrahedral $H(\curl)$ element on the Worsey-Farin split}

\begin{enumerate}
\item
Vertex-based basis functions at $\bm{x}\in \mathcal{V}$: 
$$
\bm{v}_{\bm{x}, i}=\lambda_{\bm x}\bm{e}_{i}, \quad i=1, 2, 3,
$$
where $\lambda_{\bm x}$ is the barycentric coordinates at $\bm{x}$ and $\bm e_i$ is a 3D vector with the ith component 1 and the others 0.
\item
Edge-based basis functions on $e\in \mathcal E$:
$$
\bm{v}_{e,j, i}= \phi_{j}\bm{e}_{i},\quad i=1,2, 3, \quad j=0, 1, 2, \cdots.
$$
Here
$$
 \phi_{j}=J_{j}^{2, 2}\left ( \frac{\lambda_{1}-\lambda_{2}}{\lambda_{1}+\lambda_{2}} \right )(\lambda_{1}+\lambda_{2})^{j}\lambda_{1}\lambda_{2},
$$
where  $\lambda_{1}$ and $\lambda_{2}$ are the barycentric coordinates of the two vertices of $e$.
\item
Face-based normal bases on $f\in \mathcal F$, which is shared by two macroelements $K_{1}$ and $K_{2}$:
\begin{itemize}
\item at the face subdivision point,
$$
\lambda|_{K_{i}} \bm{\nu}_{f}, \quad i=1, 2,
$$
where $\lambda|_{K_{i}}$ is the restriction of the hat function on one macroelement (a piecewise linear function on each macroelement);
\item on a face-interior edge $e_k$ of $f$,
$$
\bm{v}_{j,i,k}= \phi_{j}^{(k)}|_{K_{i}}\bm{\nu}_{f}, \quad i=1,2, \ \ k=1,2,3, \ \ j=0, 1, 2, \cdots,
$$
where
$$
 \phi_{j}^{(k)}=J_{j}^{2, 2}\left ( \frac{\lambda_{1}^{(k)}-\lambda_{2}^{(k)}}{\lambda_{1}^{(k)}+\lambda_{2}^{(k)}} \right )(\lambda_{1}^{(k)}+\lambda_{2}^{(k)})^{j}\lambda_{1}^{(k)}\lambda_{2}^{(k)}
$$
with  the barycentric coordinates $\lambda_{1}^{(k)}$ and $\lambda_{2}^{(k)}$  of the two vertices of $e_k$;
\item in the interior of a micro face $f_j$ of $f$,
$$
\bm{v}_{f, i,j}^{n_{1}, n_{2}}=\left. \phi^{(j)}_{n_{1}, n_{2}}\right |_{K_{i}}\bm{\nu}_{f}, \quad i=1,2, \ \ j=1,2,3,
$$
\begin{align*}
\phi^{(j)}_{n_{1}, n_{2}}=&J_{n_{1}}^{2n_{2}+5, 2}\left ( \frac{\lambda_{1}^{(j)}-\lambda_{2}^{(j)}-\lambda_{3}^{(j)}}{\lambda_{1}^{(j)}+\lambda_{2}^{(j)}+\lambda_{3}^{(j)}}  \right )
\cdot(\lambda_{1}^{(j)}+\lambda_{2}^{(j)}+\lambda_{3}^{(j)})^{n_{1}}\cdot\\
&J_{n_{2}}^{2, 2}\left ( \frac{\lambda_{1}^{(j)}-\lambda_{2}^{(j)}}{\lambda_{1}^{(j)}+\lambda_{2}^{(j)}} \right )(\lambda_{1}^{(j)}+\lambda_{2}^{(j)})^{n_{2}}\lambda_{1}^{(j)}\lambda_{2}^{(j)}\lambda_{3}^{(j)},
\end{align*}
with barycentric coordinates $\lambda_{1}^{(j)}$, $\lambda_{2}^{(j)}$, $\lambda_{3}^{(j)}$ of each micro face $f_j$ of $f$.
\end{itemize}
\item
Face-based tangential bases on $f\in \mathcal F$:
\begin{itemize}
\item at the face subdivision point,
$$
\lambda \bm{\tau}_{f, i}, \quad i= 1, 2,
$$
where $\lambda$ is the hat function at the face subdivision point and $\bm{\tau}_{f, i}, i=1, 2,$ are the two linearly independent unit tangent vectors of the face $f$; 
\item on a face-interior edge $e_k$,
$$
\bm{v}_{i,j,k}= \phi_{j}^{(k)} \bm{\tau}_{f, i}, \quad i= 1, 2, \ \ k=1,2,3\ \ j=0, 1, 2, \cdots
$$
\item in the interior of each micro face $f_j$ of $f$,
$$
\bm{v}_{f, i,j}^{n_{1}, n_{2}}= \phi_{n_{1}, n_{2}}^{(j)}\bm{\tau}_{f, i}, \quad i=1,2, \ \ j=1,2,3.
$$
\end{itemize}
\item
Interior bases: 
\begin{itemize}
\item at the subdivision point,
$$
\lambda_{Q}\bm{e}_{i}, \quad i=1, 2, 3,
$$
where $\lambda_{Q}$ is the hat function at the interior subdivision point;
\item on an interior edge $e_k$,
$$
\bm{v}_{j, i,k}= \phi_{j}^{(k)}\bm{e}_{i}, \ \ i=1, 2, 3, \ \ k=1,2,\cdots, 8, \ \  j=0, 1, 2, \cdots
$$
\item in the interior of an interior micro face $f_j$,
$$
\bm{v}_{i,j}^{n_{1}, n_{2}}= \phi_{n_{1}, n_{2}}^{(j)}\bm{e}_{i}, \quad i=1,2, 3,\ \ j=1,2,\cdots, 12,
$$
where
$$
\phi^{(j)}_{n_{1}, n_{2}}=J_{n_{1}}^{2n_{2}+5, 2}\left ( \frac{\lambda_{1}^{(j)}-\lambda_{2}^{(j)}-\lambda_{3}^{(j)}}{\lambda_{1}^{(j)}+\lambda_{2}^{(j)}+\lambda_{3}^{(j)}}  \right )
\cdot(\lambda_{1}^{(j)}+\lambda_{2}^{(j)}+\lambda_{3}^{(j)})^{n_{1}}J_{n_{2}}^{2, 2}\left ( \frac{\lambda_{1}^{(j)}-\lambda_{2}^{(j)}}{\lambda_{1}^{(j)}+\lambda_{2}^{(j)}} \right )(\lambda_{1}^{(j)}+\lambda_{2}^{(j)})^{n_{2}}\lambda_{1}^{(j)}\lambda_{2}^{(j)}\lambda_{3}^{(j)},
$$
with $\lambda_{1}^{(j)}$, $\lambda_{2}^{(j)}$ and $\lambda_{3}^{(j)}$ the barycentric coordinates associate with the interior face $f_j$;
\item in the interior of the micro tetrahedra $K_j$, 
$$
\bm{v}_{i,j}^{n_{0}, n_{1}, n_{2}}=\phi_{n_{0}, n_{1}, n_{2}}^{(j)}\bm{e}_{i}, \quad i=1,2, 3,\ \ j=1,2,\cdots,12,
$$
where
\begin{align} \nonumber 
&\phi^{(j)}_{n_{0}, n_{1}, n_{2}}=J_{n_{0}}^{2(n_{1}+n_{2})+8, 2}(2\lambda_{0}^{(j)}-1)
J_{n_{1}}^{2n_{2}+5, 2}\left ( \frac{2\lambda_{1}^{(j)}}{1-\lambda_{0}^{(j)}} -1 \right )(1-\lambda_{0}^{(j)})^{n_{1}}
\\&\label{3Dbases}
\cdot J_{n_{2}}^{2, 2}\left (\frac{2\lambda_{2}^{(j)}}{1-\lambda_{0}^{(j)}-\lambda_{1}^{(j)}} -1\right )(1-\lambda_{0}^{(j)}-\lambda_{1}^{(j)})^{n_{2}}\lambda_{0}^{(j)}\lambda_{1}^{(j)}\lambda_{2}^{(j)}\lambda_{3}^{(j)},\end{align}
and $\lambda_i^{(j)},i=0,1,2,3$  are the barycentric coordinates associate with the micro tetrahedron $K_j$. 
\end{itemize}
\end{enumerate}

\subsection{Tetrahedral $H(\div)$ elements}

\begin{enumerate}
\item
Vertex-based basis functions at $\bm{x}\in \mathcal{V}$: 
$$
\bm{v}_{\bm{x}, i}=\lambda_{\bm x}\bm{e}_{i}, \quad i=1, 2, 3.
$$
\item
Edge-based tangential bases on $e\in \mathcal E$: 
$$
\bm{v}_{e, i, j}^{\tau}= \phi_{j}|_{K_{i}}\bm{\tau}_{e}, \ j=0,1,\cdots, 
$$
where $\bm{\tau}_{e}$ is the tangent vector of $e$, $K_{i}$ takes $e$ as an edge.
\item
Edge-based normal bases on $e\in \mathcal E$:
$$
\bm{v}_{e,  i, j}^{\nu}= \phi_{j}\bm{\nu}_{e, i},\quad i=1, 2, \ \ j=0,1,\cdots,
$$
where $\bm{\nu}_{e, i}, i=1, 2,$ are the two normal vectors of $e$.
\item
Face-based tangential bases on $f\in \mathcal F$: $$
\bm{v}_{f, \tau, i}^{n_{1}, n_{2}}= \phi_{n_{1}, n_{2}}|_{K_{i}}\bm{\tau}_{{f}, i},\quad i=1,2,
$$
where $\bm{\tau}_{f, i}, i=1, 2,$ are the two tangent vectors of $f$,  $K_{1}$ and $K_{2}$ are the two elements  sharing $f$. Moreover, 
$$
\phi_{n_{1}, n_{2}}=J_{n_{1}}^{2n_{2}+5, 2}\left ( \frac{\lambda_{1}-\lambda_{2}-\lambda_{3}}{\lambda_{1}+\lambda_{2}+\lambda_{3}}  \right )
\cdot(\lambda_{1}+\lambda_{2}+\lambda_{3})^{n_{1}}J_{n_{2}}^{2, 2}\left ( \frac{\lambda_{1}-\lambda_{2}}{\lambda_{1}+\lambda_{2}} \right )(\lambda_{1}+\lambda_{2})^{n_{2}}\lambda_{1}\lambda_{2}\lambda_{3},
$$
where $\lambda_{1}$, $\lambda_{2}$, and $\lambda_{3}$ are the barycentric coordinates associated with $f$;
\item
Face-based normal bases  on $f\in \mathcal F$:
$$
\bm{v}_{f,\nu}^{n_{1}, n_{2}}= \phi_{n_{1}, n_{2}}\bm{\nu}_{f},
$$
where $\bm{\nu}_{f}$ is the normal vector of $f$.
\item
Interior basis functions  in $K\in \mathcal T$:
$$
\bm{v}_{K,i}^{n_{0}, n_{1}, n_{2}}=\phi_{n_{0}, n_{1}, n_{2}}\bm{e}_{i}, \quad i=1,2,3,
$$
where $\phi_{n_{0}, n_{1}, n_{2}}$ 
is given by \eqref{3Dbases} without the superscript $(j)$.
\end{enumerate}

\subsection{Condition number}

We test the condition number of mass and quasi-stiffness matrices. The condition number of the mass matrix $M$ is calculated by 
\[\kappa(A)=\frac{\lambda_{\max}}{\lambda_{\min}},\]
where $\lambda_{\max}$, $\lambda_{\min}$ are the maximum and minimum eigenvalues of the matrix $M$, respectively. For the quasi-stiffness matrix $S$, only nonzero eigenvalues will be considered.
We also consider the normalized mass and quasi-stiffness matrices $\widetilde M$ and $\widetilde S$, whose nonzero diagonal entries are scaled to 1. The condition numbers of the four matrices are listed in Table \ref{tab4}. The condition numbers of the normalized mass and quasi-stiffness matrices are $O(10^4)$ and $O(10)$ when the $p$ reaches 9 and  the condition number of $\tilde{S}$ is almost constant.

\begin{table}[!ht]
  \caption{condition numbers of $M$, $\widetilde M$, $S$, and $\widetilde S$.} \label{tab4}
   \centering
    \begin{adjustwidth}{3cm}{0cm}
  \resizebox{!}{1.8cm}
  {
\begin{tabular}{cccccccccccc}
   \hline
{$p$}& $M$&  $\widetilde M$& $S$ &  $\widetilde S$
    \\ \hline
 2&3.0345e+02&1.0121e+02& 7.8806e+01&2.9527e+01\\\hline
 3&9.1025e+03&2.2944e+02&3.2677e+02&5.8554e+00\\\hline
4&7.7549e+05&1.9106e+03& 9.4503e+03&1.9563e+01\\\hline
5&6.9908e+06&2.7049e+03&1.3638e+05&1.3648e+01\\\hline
6& 1.1922e+08& 5.3600e+03&  1.6570e+06& 1.4868e+01\\\hline
7& 1.9673e+09& 7.4625e+03&  2.0611e+07& 1.6779e+01\\\hline
9& 3.2159e+10& 1.2387e+04&  2.5840e+08&  1.9111e+01\\\hline
    \end{tabular}
    }
    \end{adjustwidth}
    \end{table}

\section{Conclusions}
This paper constructed partially discontinuous nodal finite elements for $H(\curl)$ and $H(\div)$ by allowing discontinuity in the tangential or normal directions of vector-valued Lagrange elements, respectively. These elements can be implemented as a combination of standard continuous and discontinuous elements. The bases appear to be well-conditioned in the numerical tests, which shows the computational potential of these finite elements. Other bases, local complete sequences \cite{schoberl2005high} and $hp$- preconditioning can be investigated in future research.

\bibliographystyle{siam}      
\bibliography{references}{}   

\end{document}